\newtheorem{theorem}[equation]{Theorem}
\newtheorem{lemma}[equation]{Lemma}
\newtheorem{proposition}[equation]{Proposition}
\newtheorem{corollary}[equation]{Corollary}
\newtheorem*{theorem:derhamisomorphism}{Theorem~\ref{T:derhamisomorphism}}
\newtheorem*{theorem:characterization}{Theorem~\ref{T:characterization}}
\theoremstyle{definition}
\newtheorem{example}[equation]{Example}
\newtheorem{remark}[equation]{Remark}
\numberwithin{equation}{section}
\newcommand{\ZZ}{\mathbb{Z}}
\newcommand{\TT}{\mathbb{T}}
\newcommand{\CC}{\mathbb{C}}
\newcommand{\zz}{\bold{z}}
\newcommand{\ww}{\mathbf{w}}
\newcommand{\yy}{\mathbf{y}}
\DeclareMathOperator{\Der}{Der}
\DeclareMathOperator{\DR}{DR}
\DeclareMathOperator{\D}{D}
\DeclareMathOperator{\GL}{GL}
\DeclareMathOperator{\Mat}{Mat}
\DeclareMathOperator{\Eis}{Eis}
\DeclareMathOperator{\Hom}{Hom}
\DeclareMathOperator{\SL}{SL}
\DeclareMathOperator{\KS}{KS}
\DeclareMathOperator{\Res}{Res}
\DeclareMathOperator{\Id}{Id}
\newcommand{\tr}{\mathrm{tr}}
\newcommand{\dnorm}[1]{\lVert #1 \rVert}
\newcommand{\inorm}[1]{{\lvert #1 \rvert}}
\begin{document}

\title[Drinfeld modular forms]{On the partial derivatives of Drinfeld modular forms of arbitrary rank}

\author{Yen-Tsung Chen}
\address{Y-T. Chen \\ Department of Mathematics, National Cheng Kung University, Tainan City, Taiwan R.O.C.}
\email{ytchen.math@gmail.com}

\author{O\u{g}uz Gezm\.{i}\c{s}}
\address{O. Gezm\.{i}\c{s}\\Department of Mathematics, National Tsing Hua University, Hsinchu City 30042, Taiwan R.O.C.}
\email{gezmis@math.nthu.edu.tw}



\date{\today}



\keywords{Serre derivation, Drinfeld modular forms, Drinfeld modules}
\thanks{The first author was partially supported by Professor Chieh-Yu Chang’s MOST Grant 107-2628-M-007-002-MY4 and the MOST Grant 110-2917-I-007-005. The second author acknowledges support by NSTC Grant 113-2115-M-007-001-MY3.}
\subjclass[2010]{Primary 11F52; Secondary 11G09}

\begin{abstract}
    In this paper, we obtain an analogue of the Serre derivation acting on the product of spaces of Drinfeld modular forms which generalizes the differential operator introduced by Gekeler in the rank two case. We further introduce a finitely generated algebra  $\mathcal{M}_r$ containing all the Drinfeld modular forms for the full modular group and show its stability under the partial derivatives. 
\end{abstract}
\maketitle
\section{Introduction}

Let $\mathfrak{M}_{k}(\Gamma)$ be the $\CC$-vector space of elliptic modular forms of weight $k$ for a congruence subgroup $\Gamma$ of $\SL_2(\ZZ)$ and $G_2$ be the false Eisenstein series of weight 2 normalized so that the first coefficient of its Fourier expansion is one. \textit{The Serre derivation} is the differential operator $D_k:\mathfrak{M}_k(\Gamma)\to \mathfrak{M}_{k+2}(\Gamma)$ given by 
\[
D_kf:=\frac{d}{dz}f-kG_2f.
\]
Naturally, one expects to generalize the Serre derivation to Hilbert modular forms or Siegel modular forms. However, although there are several differential operators defined by using the partial derivatives of Hilbert modular forms (see \cite{Coh77,Lee01,Ran56,Res72,Zag94}) which preserve modularity, there is no analogue of Serre derivation for this setting. On the other hand, to carry the notion of the Serre derivation for Siegel modular forms of degree $g$, Yang and Yin required the existence of a symmetric $g\times g$ matrix $G$ consisting of functions on the Siegel upper half plane and satisfying a certain functional equation \cite[Thm. 2.9]{YY15}. Nonetheless, Hofmann and Kohnen \cite{HK18} showed that such a matrix $G$ never exists and thus the generalization of $D_k$ also could  not be established for Siegel modular forms. For more details and explicit list of references, we refer the reader to aforementioned articles.

Unlike the case of classical setting briefly mentioned above, for Drinfeld modular forms, there is a generalization of the Serre derivation to more general objects and constructing it will be one of the main themes of the present paper. In particular, we introduce a multi-linear operator in Theorem \ref{T:I2} generalizing the Serre derivation defined by Gekeler \cite[Sec. 8]{Gek88} for Drinfeld modular forms of rank two to the setting of Drinfeld modular forms of arbitrary rank.

To state our results, we firstly introduce  some background. Let $\mathbb{F}_q$ be the finite field with $q$ elements where $q$ is a positive power of a prime $p$. Let $\theta$ be a variable over $\mathbb{F}_q$. We define $A$ to be the polynomial ring $\mathbb{F}_q[\theta]$ and $K$ to be its fraction field. We consider the non-archimedean norm $|\cdot|$ corresponding to the infinite place normalized so that $\inorm{\theta}=q$ and let $K_{\infty}$ be the completion of $K$ with respect to $|\cdot|$. We further let $\mathbb{C}_{\infty}$ be the completion of a fixed algebraic closure of $K_{\infty}$.

For $r\geq 2$, we define the \textit{Drinfeld period domain} $\Omega^r$ by \[
\Omega^r:=\mathbb{P}^{r-1}(\CC_{\infty})\setminus \{K_{\infty}\text{-rational hyperplanes}\}
\]
and identify each element $\zz\in \Omega^r$ by $\zz:=(z_1,\dots,z_r)^{\tr}\in \CC_{\infty}^r$ so that $z_r=1$. By definition, the entries $z_1,\dots,z_r$ are $K_{\infty}$-linearly independent. The $A$-module $A^r\zz\subset \CC_{\infty}$ generated over $A$ by the entries of $\zz$ forms an $A$-lattice of rank $r$ and hence, due to Drinfeld \cite{Dri74}, there exists a unique Drinfeld module, which we denote by  $\phi^{\zz}$, corresponding to $A^r\zz$ (see \S2 for more details). We note that $\Omega^r$ is a connected rigid analytic space whose admissible open subsets and open coverings can be explicitly described (see \cite{Dri74}, \cite[Sec. 4]{Hab21}, \cite{Put87} and \cite[Sec. 4]{FvdP04} for more details). Let $\{\Omega_n^r\}_{n=1}^{\infty}$ be such a covering of $\Omega^r$ (see \S3 for its explicit definition). Then we call $f:\Omega^r\to \CC_{\infty}$ \textit{a rigid analytic function} if, for each $n\in \ZZ_{\geq 1}$, its restriction to $\Omega_n^r$ is the uniform limit of rational functions which do not have any pole in $\Omega_n^r$. 

The study of Drinfeld modular forms for the rank two setting was initiated by Goss, in the 1980s, in his PhD thesis (see also \cite{Gos80})  and continued to be developed in following years by Gekeler (see \cite{Gek86,Gek88,Gek89,Gek90}). In \cite{BBP18}, based on the results of Pink in \cite{Pin13} and H\"{a}berli in his PhD thesis \cite{Hab21}, Basson, Breuer and Pink generalized the theory of Drinfeld modular forms to the arbitrary rank. They define Drinfeld modular forms, algebraically, to be global sections of particular ample invertible sheaves on the compactification of the Drinfeld moduli space and analytically to be rigid analytic functions on $\Omega^r$ satisfying a holomorphy condition at infinity and a certain automorphy condition with respect to an arithmetic subgroup of $\GL_{r}(K)$. They further studied the graded $\CC_{\infty}$-algebra of Drinfeld modular forms for $\GL_{r}(A)$ as well as the $\CC_{\infty}$-vector space of Drinfeld modular forms of any given weight and type (see \cite[Sec. 17]{BBP18}). We mention that the work of Pellarin \cite{Pel21} provides  another direction for the generalization of this theory via the study of vectorial Drinfeld modular forms and their deformations. For more details on the history of Drinfeld modular forms, the reader can refer to  \cite[Sec. 7]{BB17}, \cite{BBP18} and \cite[Sec. 1.1]{Per14}.

For $1\leq i \leq r$, we define \textit{the $i$-th coefficient form}  $g_i:\Omega^r\to \CC_{\infty}$, a Drinfeld modular form of weight $q^{i}-1$ and type 0 for $\GL_{r}(A)$, which sends each $\zz\in \Omega^r$ to the $i$-th coefficient of $\phi^{\zz}$. Note that when $i=r$, $g_r$ is indeed a nowhere-vanishing Drinfeld cusp form. Furthermore, Gekeler \cite{Gek17} introduced a Drinfeld cusp form $h_r:\Omega^r\to \CC_{\infty}$ of non-zero type which plays an essential role in the present work (see \S3 for more details). For any rigid analytic function $\mathfrak{h}:\Omega^r\to \CC_{\infty}$, let $\partial_{i}(\mathfrak{h})$ be the partial derivative of $\mathfrak{h}$ with respect to $z_i$ and for each $1\leq j \leq r-1$, we consider \textit{the false Eisenstein series $E_r^{[j]}:\Omega^r\to \CC_{\infty}$} given by
\[
E_r^{[j]}(\zz):=\frac{1}{g_r(\zz)}\partial_{j}(g_r(\zz)), \ \ \zz=(z_1,\dots,z_r)^{\tr}\in \Omega^r.
\] 
When $r=2$, we refer the reader to \cite[Sec. 8]{Gek88} for an explicit study of these functions. For $k\in \ZZ$, we further define the operator $\mathcal{D}_{j,k}$ by 
\[
\mathcal{D}_{j,k}(\mathfrak{h}):=\partial_{j}(\mathfrak{h})+k\mathfrak{h}E_{r}^{[j]}.
\]

Let $M_{k}^{m}(\Gamma)$ be the  $\CC_{\infty}$-vector space of Drinfeld modular forms  of weight $k\in \ZZ_{\geq 0}$ and type $m\in \ZZ/(q-1)\ZZ$ for a congruence subgroup $\Gamma$ of $\GL_{r}(A)$ (see \S3 for the explicit definition). Our first result, restated as Theorem \ref{T:RS} later, shows the existence of a differential operator acting on the product of the $\CC_{\infty}$-vector spaces of Drinfeld modular forms.

\begin{theorem}\label{T:I2}   For each $1\leq i \leq r-1$, let $\Gamma_i \leqslant \GL_r(A)$ be a congruence subgroup, $k_i\in \ZZ_{\geq 0}$ and $m_i\in \ZZ/(q-1)\ZZ$. Consider the operator $\D_{(k_1,\dots,k_{r-1})}$ on $M_{k_1}^{m_1}(\Gamma_1)\times \cdots \times M_{k_{r-1}}^{m_{r-1}}(\Gamma_{r-1})$ defined by 
	\[
	\D_{(k_1,\dots,k_{r-1})}(f_1,\dots,f_{r-1}):=\det \begin{pmatrix}
	\mathcal{D}_{1,k_1}(f_1)&\cdots &\mathcal{D}_{r-1,k_1}(f_1)\\
	\vdots & & \vdots  \\
	\mathcal{D}_{1,k_{r-1}}(f_{r-1})&\cdots &\mathcal{D}_{r-1,k_{r-1}}(f_{r-1})
	\end{pmatrix}.
	\]
	Then the following statements hold.
	\begin{itemize}
		\item[(i)] $\D_{(k_1,\dots,k_{r-1})}$ is a $\CC_{\infty}$-multi-linear derivation.
		\item[(ii)] $\D_{(k_1,\dots,k_{r-1})}(f_1,\dots,f_{r-1})\in M_{k_1+\dots+k_{r-1}+r}^{m_1+\dots+m_{r-1}+1}(\cap_{i=1}^{r-1}\Gamma_i)$.
	\end{itemize}
\end{theorem}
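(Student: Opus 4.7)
The strategy is to separate the two parts of the theorem: (i) is purely algebraic, following from the row properties of the determinant, while (ii) rests on a cancellation phenomenon whereby each operator $\mathcal{D}_{j,k}$ is only quasi-modular, but the antisymmetric wedge encoded in $\det$ exactly erases the non-modular error. For (i), observe first that $\mathcal{D}_{j,k}$ is $\CC_\infty$-linear in $\mathfrak{h}$ when $k$ is fixed, so each row of the defining $(r-1)\times(r-1)$ matrix is $\CC_\infty$-linear in $f_i$; combined with multilinearity of $\det$ in its rows, this yields $\CC_\infty$-multilinearity of $\D_{(k_1,\dots,k_{r-1})}$. For the derivation property, the Leibniz rule for $\partial_j$ combined with the splitting $(k+l)fh\,E_r^{[j]} = k(fh)E_r^{[j]} + l(fh)E_r^{[j]}$ yields
\[
\mathcal{D}_{j,k+l}(fh) \;=\; h\,\mathcal{D}_{j,k}(f) + f\,\mathcal{D}_{j,l}(h)
\]
for $f,h$ of weights $k,l$; inserting this into the $i$-th row of the defining matrix and expanding by row multilinearity produces the Leibniz rule in the $i$-th argument.

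The heart of the proof is the transformation law under $\gamma \in \bigcap_{i}\Gamma_i$. Write $J := j(\gamma,\zz) = \gamma_{r1}z_1+\cdots+\gamma_{rr}$ and $w_l := (\gamma\cdot\zz)_l$; the identity $(\gamma\cdot\zz)_l = (\sum_m\gamma_{lm}z_m)/J$ and a direct chain-rule computation give $\partial_j w_l = (\gamma_{lj}-w_l\gamma_{rj})/J$ for $1 \le l \le r-1$ and $\partial_j w_r = 0$. Applying $\partial_j$ to the automorphy relations $f(\gamma\cdot\zz) = (\det\gamma)^{-m}J^{k}f(\zz)$ and $g_r(\gamma\cdot\zz) = J^{q^r-1}g_r(\zz)$, then dividing the second by $g_r(\gamma\cdot\zz)$ and using $q^r-1 \equiv -1 \pmod p$, one obtains
\[
\sum_{l=1}^{r-1}(\partial_l f)(\gamma\cdot\zz)\bigl(\gamma_{lj}-w_l\gamma_{rj}\bigr) = (\det\gamma)^{-m}\bigl(k\,J^{k}\gamma_{rj}f + J^{k+1}\partial_j f\bigr),
\]
\[
\sum_{l=1}^{r-1}E_r^{[l]}(\gamma\cdot\zz)\bigl(\gamma_{lj}-w_l\gamma_{rj}\bigr) = J\,E_r^{[j]}(\zz) - \gamma_{rj}.
\]
Multiplying the second identity by $k\,f(\gamma\cdot\zz)$ and adding to the first, the two $\gamma_{rj}$-contributions cancel, yielding the key identity
\[
\sum_{l=1}^{r-1}\mathcal{D}_{l,k}(f)(\gamma\cdot\zz)\bigl(\gamma_{lj}-w_l\gamma_{rj}\bigr) \;=\; (\det\gamma)^{-m}\,J^{k+1}\,\mathcal{D}_{j,k}(f)(\zz).
\]

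Stacking this identity for each $(f_i,k_i,m_i)$ as the rows of a matrix equation produces $N(\gamma\cdot\zz)\,B(\gamma,\zz) = \Lambda\,N(\zz)$, where $N := \bigl(\mathcal{D}_{j,k_i}(f_i)\bigr)_{i,j=1}^{r-1}$ (so $\det N = \D_{(k_1,\dots,k_{r-1})}(f_1,\dots,f_{r-1})$), the matrix $B := (\gamma_{lj}-w_l\gamma_{rj})_{l,j=1}^{r-1}$, and $\Lambda$ is diagonal with entries $(\det\gamma)^{-m_i}J^{k_i+1}$. The factor $\det B = \det\gamma/J$ is computed by putting $\tilde B := \bigl(\begin{smallmatrix}I_{r-1}&-w\\ 0&1\end{smallmatrix}\bigr)\gamma$, observing that $\tilde B\zz = J\,e_r$, and evaluating $\det\tilde B = \det\gamma$ both directly and by the column operation that replaces the last column of $\tilde B$ with $Je_r$. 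Taking determinants of the matrix equation and clearing then produces exactly the transformation law of weight $\sum k_i + r$ and type $\sum m_i + 1$. Rigid analyticity of the output is automatic from that of the building blocks (using that $g_r$ is nowhere vanishing, so each $E_r^{[j]}$ is rigid analytic). The step I expect to be the real technical obstacle is the holomorphy-at-infinity condition for the determinant: this will require controlling the $u$-expansions of the $\mathcal{D}_{j,k_i}(f_i)$ along all boundary strata and verifying that the cancellations survive at the boundary, for which the explicit expansions of $g_r$, of the $E_r^{[j]}$, and of Gekeler's cusp form $h_r$ developed later in the paper will be essential.
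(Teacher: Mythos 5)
Your part (i) and the weak-modularity half of part (ii) are correct, and the linear algebra is a clean variant of what the paper does: where the paper packages the transformation law through the cofactor matrix $\mathfrak{C}^{\gamma}(\zz)=(c^{\gamma}_{jl}-c^{\gamma}_{jr}z_l)$ and proves $\det(\mathfrak{C}^{\gamma}(\zz))=\det(\gamma)^{r-2}j(\gamma,\zz)$ by a detour through the functional equations of $h_rL_{ij}$ and the period matrix, you work with $B=(\gamma_{lj}-w_l\gamma_{rj})$ built directly from $\gamma$ and compute $\det B=\det(\gamma)/j(\gamma,\zz)$ by an elementary column operation. The two are equivalent (your $B$ is essentially the inverse-transpose of the paper's $\mathfrak{C}^{\gamma}(\zz)$ up to scalars), your cancellation of the $\gamma_{rj}$-terms is exactly the mechanism behind the paper's Lemma 5.8(iii), and the resulting weight $\sum k_i+r$ and type $\sum m_i+1$ come out correctly. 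Your derivation of the determinant factor is arguably more self-contained than the paper's.

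However, there is a genuine gap: you have only shown that $\D_{(k_1,\dots,k_{r-1})}(f_1,\dots,f_{r-1})$ is a \emph{weak} modular form. Membership in $M^{\mathfrak{m}}_{\mathfrak{k}}(\cap_i\Gamma_i)$ requires holomorphy at infinity of $\D(f_1,\dots,f_{r-1})|_{\mathfrak{k},\mathfrak{m}}[\delta]$ for every $\delta\in\GL_r(K)$, and you explicitly defer this, which is precisely the hard part of the paper's proof. Two things are missing. First, to reduce to a single cusp one needs a transformation identity for $\mathcal{D}_{i,k}$ under a matrix $\delta$ that does \emph{not} lie in the congruence subgroup; the paper establishes this via an auxiliary cofactor identity (Lemma 5.10) proved by evaluating the automorphy relation on a nowhere-vanishing form, and concludes that the slashed determinant is again a determinant of operators $\mathcal{D}_{l,k_j}$ applied to forms for $\Gamma(\mathfrak{n})$. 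Second, and more seriously, the individual entries $\mathcal{D}_{j,k_i}(f_i)$ for $j\geq 2$ are \emph{not} holomorphic at infinity in any obvious way: applying $\partial_j$ to a $u_{\Gamma(\mathfrak{n})}$-expansion $\sum_i\mathfrak{F}_i(\tilde{\zz})u_{\Gamma(\mathfrak{n})}(\zz)^i$ differentiates both the coefficients $\mathfrak{F}_i$ and the uniformizer itself, and $\partial_j(u_{\Gamma(\mathfrak{n})}^i)$ produces terms involving $E_{r-1}^{[j-1]}(\tilde{\zz})$, $h_{r-1}(\tilde{\zz})$, cofactors of the rank-$(r-1)$ period matrix and specialized Anderson generating functions (the paper's Lemma 5.15). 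Controlling all of these requires the limit estimates of Lemmas 5.13, 5.14 and the Cauchy-type bound on $\dnorm{\partial_j\mathfrak{F}_i}$ of Lemma 5.12, culminating in Proposition 5.16, which shows $\mathcal{D}_{j,k}(f)(\zz)^{q^{r-1}}$ stays bounded as $|\zz|_{\infty}\to\infty$ along $\mathfrak{I}_{\ww}$; the paper then rules out negative-index coefficients in the expansion of the determinant by contradiction. None of this analysis appears in your proposal, so as written the argument establishes weak modularity but not modularity.
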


Motivated by Theorem \ref{T:I2}, we call  $\D_{(k_1,\dots,k_{r-1})}$ \textit{the Serre derivation}.  We remark that when $r=2$, Serre derivation has been extensively studied by Gekeler \cite{Gek88,Gek90} providing a striking analogy with the classical setting. On the other hand, the situation in the arbitrary rank setting, that is when $r\geq 3$, requires further innovative ideas, mainly for the following reason: In the rank two case, one can identify $\Omega^2$ with $\mathbb{C}_{\infty}\setminus K_{\infty}$ and hence realize rigid analytic functions to be single variable functions. Thus their partial derivatives are only based on one variable. On the other hand, when $r\geq 2$, our rigid analytic functions are viewed as $(r-1)$-many variable functions, and this leads to the consideration of partial derivatives with respect to $(r-1)$-many different variables. Nonetheless, taking the partial derivative of a Drinfeld modular form with respect to $(r-2)$-many of these variables will result a rigid analytic function which does not admit a Fourier expansion  around the infinity cusp. Hence one needs a delicate analysis to determine the behavior of $\D_{(k_1,\dots,k_{r-1})}(f_1,\dots,f_{r-1})$ around the cusps of a congruence subgroup. In order to do this, we first investigate the limiting behavior of partial derivatives of the uniformizer $u_{\Gamma(\mathfrak{n})}$ at infinity for any $\mathfrak{n}\in A\setminus \{0\}$ (see \S3 for its explicit definition) and our false Eisenstein series whenever $\zz$ is an element of $\Omega^{r}$ satisfying certain conditions (Lemma \ref{L:bounds2}). This, in Proposition \ref{P:Hol}, allows us to analyze the limiting behavior of the terms coming from the cofactor expansion of $\D_{(k_1,\dots,k_{r-1})}(f_1,\dots,f_{r-1})$ along the first column. Finally, combining our analysis with certain identities and functional equations obtained in Proposition \ref{P:1} and Lemma \ref{L:1}, we finish the proof of Theorem \ref{T:I2}.

In what follows, we present the second main result of the present paper. Let $\exp_{\phi^{\zz}}:\CC_{\infty}\to \CC_{\infty}$ be the exponential function of $\phi^{\zz}$. We consider \textit{the period matrix of $\phi^{\zz}$} given as 
\begin{equation}\label{E:period}
P_{\zz}:=
\begin{pmatrix}
-z_1& F^{\phi^{\zz}}_{\tau}(z_1) &\dots &F^{\phi^{\zz}}_{\tau^{r-1}}(z_1)\\
\vdots &\vdots & & \vdots \\
-z_r& F^{\phi^{\zz}}_{\tau}(z_r) &\dots &F^{\phi^{\zz}}_{\tau^{r-1}}(z_r)
\end{pmatrix}\in \GL_{r}(\CC_{\infty})
\end{equation}
where for each $1\leq j \leq r-1$, $F_{\tau^j}^{\phi^{\zz}}:\CC_{\infty}\to \CC_{\infty}$ is the unique entire function satisfying the functional equation
\begin{equation}\label{E:quasiperiodic}
F_{\tau^j}^{\phi^{\zz}}(\theta z)-\theta F^{\phi^{\zz}}_{\tau^j}(z)=\exp_{\phi^{\zz}}(z)^{q^j}, \  \ z\in \CC_{\infty}.
\end{equation}
Note that $F_{\tau},\dots,F_{\tau^{r-1}}$ are indeed \textit{quasi-periodic functions of $\phi^{\zz}$}, which play the analogous role of the quasi-periodic functions for elliptic curves (see \S5.1 for more details). For each $1 \leq i\leq r-1$ and $1\leq j \leq r$, consider 
\begin{align*}
L_{ij} &: \Omega^r\to \CC_{\infty}\\
&\zz \mapsto  P_{\zz}^{(i,j)}
\end{align*}
where $ P_{\zz}^{(i,j)}$ is the $(i,j)$-cofactor of the period matrix $P_{\zz}$. We finally let $(-\theta)^{1/(q-1)}$  be a fixed $(q-1)$-st root of $-\theta$ and define \textit{the Carlitz period}, analogous to $2\pi i $ in the classical setting (see \cite[Sec. 2.5]{Tha04}), by 
\[\tilde{\pi}:=\theta(-\theta)^{1/(q-1)}\prod_{i=1}^{\infty}\Big(1-\theta^{1-q^i}\Big)^{-1}\in \CC_{\infty}^{\times}.
\]

Our next theorem, restated as Theorem \ref{T:GM2} later, can be described as follows.
\begin{theorem}\label{T:I1} For each $1\leq i,j \leq r-1$ and $\zz\in \Omega^r$, we have
		 \[
         \partial_{j}(g_i)(\zz)=E_r^{[j]}(\zz)g_i(\zz)+\tilde{\pi}^{q+\dots+q^{r-1}}h_r(\zz)L_{j(i+1)}(\zz).
         \]
\end{theorem}

 When $r=2$, Theorem \ref{T:I1} was established by Gekeler in \cite[Sec. 9]{Gek88} and \cite[(8.4)]{Gek89} by using the ``$k/12$ formula for Drinfeld modular forms'' and an analysis of the behavior at a neighborhood of infinity of the function $h_2$. Our method, however, follows a completely different path, which also provides a new proof in the rank two case. More precisely, we study the work of Pellarin in \cite[Sec. 8]{Pel19} on the deformation of vector-valued modular forms and combine it with the theory of Drinfeld modules over the Tate algebra developed in \cite{AnglesPellarinTavares16} and \cite{GP19}. Later on, we use the explicit relations between the Eisenstein series and the coefficients of the exponential and the logarithm series of Drinfeld modules to complete the proof (see \S5.2 for more details). 

An immediate corollary of Theorem \ref{T:GM2}, later restated as Proposition \ref{P:hfunc}, which follows from an explicit algebraic relation between  $h_r$ and $L_{ij}$ (see \eqref{E:DET}) yields a different characterization of the $h$-function of Gekeler from that given in \cite[Sec. 3.7]{Gek17}.

\begin{corollary}\label{C:RS} We have 
	\[
	\D_{(q-1,\dots,q^{r-1}-1)}(g_1,\dots,g_{r-1})(\zz)=\tilde{\pi}^{q+\dots+q^{r-1}}h_r(\zz).
	\]
\end{corollary}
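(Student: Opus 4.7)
The plan is to substitute the formula from Theorem~\ref{T:I1}(ii) into the determinant and exploit the characteristic-$p$ arithmetic of $\CC_{\infty}$ together with the algebraic identity \eqref{E:DET}. Since $g_i$ is a Drinfeld modular form of weight $q^i - 1$, combining the definition of $\mathcal{D}_{j,k}$ with Theorem~\ref{T:I1}(ii) gives
\[
\mathcal{D}_{j,\, q^i-1}(g_i) = \partial_j(g_i) + (q^i - 1)\, g_i\, E_r^{[j]} = q^i\, g_i\, E_r^{[j]} + \tilde{\pi}^{q + \cdots + q^{r-1}}\, h_r\, L_{j(i+1)},
\]
so each entry of the matrix defining $\D_{(q-1,\dots,q^{r-1}-1)}(g_1,\dots,g_{r-1})$ splits as a sum of a ``rank-one'' piece $(q^i g_i)(E_r^{[j]})$ and an ``$L$-piece'' proportional to $L_{j(i+1)}$.

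The key observation is that $\CC_{\infty}$ has characteristic $p$ and $q = p^n$, so $q^i = 0$ in $\CC_{\infty}$ for every $i \geq 1$; the rank-one piece therefore vanishes identically and each entry of the matrix collapses to $\tilde{\pi}^{q + \cdots + q^{r-1}}\, h_r\, L_{j(i+1)}$. Pulling the common scalar out of each of the $r-1$ rows by multilinearity yields
\[
\D_{(q-1,\dots, q^{r-1}-1)}(g_1,\dots,g_{r-1})(\zz) = \bigl(\tilde{\pi}^{q + \cdots + q^{r-1}}\, h_r(\zz)\bigr)^{r-1}\, \det\bigl(L_{j(i+1)}(\zz)\bigr)_{1\leq i,j \leq r-1}.
\]

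To finish, I would invoke the identity \eqref{E:DET}. The matrix $\bigl(L_{j(i+1)}\bigr)_{i,j}$ is assembled from the cofactors of the period matrix $P_{\zz}$ obtained by deleting its first column, and \eqref{E:DET} provides precisely the closed-form evaluation of the determinant of this cofactor submatrix in terms of $h_r$ and $\tilde{\pi}$. Substituting that evaluation cancels $r-2$ of the $r-1$ factors of $\tilde{\pi}^{q + \cdots + q^{r-1}}\, h_r$ and leaves the claimed identity.

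There is no real obstacle beyond assembling these ingredients. The only subtle step is recognising the characteristic-$p$ vanishing $q^i = 0$, which is exactly what lets the determinant collapse to a scalar multiple of the $L$-determinant with no additional cross terms from the rank-one $E_r^{[j]}$-piece; everything else is a formal consequence of Theorem~\ref{T:I1} and the cofactor identity \eqref{E:DET}.
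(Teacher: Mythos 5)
Your proof is correct and follows exactly the route the paper intends: substitute Theorem~\ref{T:GM2}(ii) into each entry of the determinant, observe that $(q^i-1)+1=q^i=0$ in characteristic $p$ so only the $\tilde{\pi}^{q+\cdots+q^{r-1}}h_rL_{j(i+1)}$ term survives, and then evaluate the resulting determinant via \eqref{E:DET}. The paper gives no separate proof of the corollary, calling it immediate from these two ingredients, and your exponent bookkeeping ($(r-1)$ factors pulled out, $(r-2)$ cancelled by \eqref{E:DET}) checks out.
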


Our next goal is to introduce a particular $\CC_{\infty}$-algebra invariant under the partial derivatives. We define  $\mathcal{M}_r$ by
\[
\mathcal{M}_r:=\mathbb{C}_\infty[g_1,\dots,g_{r-1},h_r, \partial_j(g_i), E_r^{[j]}\mid 1\leq i,j\leq r-1].
\]
One of the main purposes of the present paper is to initiate a study towards understanding the differential structure of $\mathcal{M}_r$. We emphasize that $\mathcal{M}_2$ is the $\CC_{\infty}$-algebra of \textit{Drinfeld quasi-modular forms} in the rank two setting, defined by Bosser and Pellarin, containing the graded $\CC_{\infty}$-algebra of Drinfeld modular forms for $\GL_2(A)$ and is stable under \textit{the hyperderivatives of degree $n\geq 0$}, a certain higher derivatives which replace the usual derivatives in the positive characteristic case  (see \S5.3 for more details).

Our result, later stated as Theorem \ref{Thm:QM} and Theorem \ref{T:eq}, can be given as follows.
\begin{theorem}\label{T:I3} The $\CC_{\infty}$-algebra $\mathcal{M}_r$ is stable under the partial derivatives and strictly contains the graded $\CC_{\infty}$-algebra of Drinfeld modular forms for $\GL_{r}(A)$. Moreover, if we let $\widetilde{\mathcal{M}_r}$ be the $\CC_{\infty}$-algebra  given by 
\[
\widetilde{\mathcal{M}_r}:=\mathbb{C}_\infty[g_i,h_rL_{ij}\mid 1\leq i\leq r-1,~1\leq j\leq r],
\]
then $\widetilde{\mathcal{M}_r}=\mathcal{M}_r$.
\end{theorem}

\begin{remark}  Although it is still not clear how to define quasi-modular forms explicitly in the higher rank setting, due to Theorem \ref{T:I3}, we predict that $\mathcal{M}_r$ is the ring of such forms. Moreover, to determine the transcendence degree of $\mathcal{M}_r$ as a $\CC_{\infty}$-algebra, it would be interesting to investigate a suitable adaptation of the strategy used in the work of Bertrand and Zudilin on Siegel modular forms \cite[Thm. 1]{BZ03}. Inspired by the rank two setting, where the transcendence degree of $\mathcal{M}_r$ is $3$ \cite[Thm. 1]{BP08}, and the equality $\widetilde{\mathcal{M}_r}=\mathcal{M}_r$, it is natural to expect that the transcendence degree of $\mathcal{M}_r$ is $r^2-1$. However, this requires a more technical approach which would be beyond the scope of the present paper and hence we leave this problem as one of the topics for the  future research.
\end{remark}


The outline of the paper is as follows. In \S2, we introduce  basic properties of Drinfeld modules. In \S3, we explain some details on Drinfeld modular forms and provide several examples including the $h$-function of Gekeler.  In \S4, we provide a proof for Theorem \ref{T:I2}. Finally, in \S5, we  obtain Theorem \ref{T:I1} and Theorem \ref{T:I3} after analyzing the deformation of Eisenstein series.

\subsection*{Acknowledgments} The authors would like to express their sincere gratitude to Jing Yu for his continuous interest on the present work as well as for many fruitful discussions on the work done in \cite{CG21}, which motivate the results obtained in the paper. The authors are indebted to Mihran Papikian for his suggestions which lead to improve the results in the paper. The authors thank to Gebhard B\"{o}ckle, Chieh-Yu Chang, Ernst-Ulrich Gekeler, Peter Gr\"{a}f, Simon H\"{a}berli, Andreas Maurischat and Federico Pellarin for many  valuable comments on the results of the paper. The authors are also grateful to the referee for several valuable suggestions on the structure and the organization of the manuscript, especially for the alternative arguments which shorten the proof of Theorem \ref{T:I2} substantially. The first author also thanks Texas A\&M University for its hospitality. 

\section{Preliminaries and Background}
In this section, we introduce the necessary background for Drinfeld modules. For more details, we refer the reader to \cite[\S2]{Gek88}, \cite[\S3, \S4]{Goss}, \cite{Gek90} and \cite[\S 2]{Tha04}.

 For any $\mathbb{F}_q$-algebra $L\subseteq \CC_{\infty}$ containing $K$, we define the non-commutative ring $L[[\tau]]$ of power series in $\tau$ subject to the condition 
\[
\tau z =z^q \tau,\ \ z\in L.
\]
Moreover, we let $L[\tau]\subset L[[\tau]]$ be the ring of polynomials in $\tau$ with coefficients in $L$. It has an action on $L$ given by 
\[
\sum c_i\tau^i \cdot z:=\sum c_iz^{q^i}\in L.
\]

Let $r\in \ZZ_{\geq 1}$. \textit{A Drinfeld module $\phi$ of rank $r$ over $L$} is an $\mathbb{F}_q$-algebra homomorphism $\phi:A\to L[\tau]$ given by
\[
\phi_\theta:=\theta+g_{1}\tau+\dots +g_{r}\tau^{r}
\]
such that $g_{r}\neq 0$. For each $1\leq i \leq r$, we call $g_{i}$ \textit{the $i$-th coefficient of $\phi$}. 

For each $\phi$, there exists a unique series $\exp_{\phi}=\sum_{i\geq 0}\alpha_{i}\tau^i\in \CC_{\infty}[[\tau]]$, called \textit{the exponential series of $\phi$}, satisfying $\alpha_0=1$ and 
\[
\exp_{\phi} \theta=\phi_\theta \exp_{\phi}.
\] 
Moreover it induces an $\mathbb{F}_q$-linear entire function $\exp_{\phi}:\CC_{\infty}\to \CC_{\infty}$
given by $\exp_{\phi}(z)=\sum_{i\geq 0}\alpha_{i}z^{q^i}$ for each $z\in \CC_{\infty}$. Similarly, one can have \textit{the logarithm series $\log_{\phi}$ of $\phi$} which is the formal inverse of $\exp_{\phi}$ and given by the infinite series $\log_{\phi}=\sum_{i\geq 0}\beta_{i}\tau^i\in \CC_{\infty}[[\tau]]$ satisfying $\beta_0=1$ and 
\[
\theta\log_{\phi} =\log_{\phi}\phi_{\theta}.
\] 

\textit{An $A$-lattice $\Lambda$ of rank $r$} is a free $A$-module of rank $r$ which is discrete, that is its intersection with any ball in $\CC_{\infty}$ of finite radius is finite. For such $\Lambda$, consider the function $e_{\Lambda}:\CC_{\infty}\to \CC_{\infty}$ defined by 
\[
e_{\Lambda}(z):=z\prod_{\substack{\lambda\in \Lambda\\\lambda\neq 0}}\Big(1-\frac{z}{\lambda}\Big), \ \ z\in \CC_{\infty}.
\]
For each $a\in A$, one can define a map $\phi_a^{\Lambda}:\CC_{\infty}\to \CC_{\infty}$ so that the following diagram commutes:
\begin{displaymath}
\begin{tikzcd}[column sep=large]
0 \arrow{r} &\Lambda \arrow{r} \arrow{d}{\cdot a}
&\CC_{\infty}\arrow{r}{e_{\Lambda}}\arrow{d}{\cdot a}
&\CC_{\infty}\arrow{d}{\phi_a^{\Lambda}}\arrow{r}&0\\
0 \arrow{r} &\Lambda \arrow{r}&\CC_{\infty}\arrow{r}{e_{\Lambda}}&\CC_{\infty} \arrow{r}&0.
\end{tikzcd}
\end{displaymath}
Indeed, for any Drinfeld module $\phi$ of rank $r$ over $\CC_{\infty}$, there exists a unique $A$-lattice of rank $r$, called \textit{the period lattice of $\phi$}, so that the above diagram is commutative. Moreover, due to Drinfeld \cite{Dri74}, we know that the association $\Lambda\to \phi^{\Lambda}$ defines a bijection between the set of $A$-lattices of rank $r$  in $\CC_{\infty}$ and the set of Drinfeld modules of rank $r$ over $\CC_{\infty}$. Furthermore, we call each non-zero element of $\Lambda$ \textit{a period} of $\phi^{\Lambda}$.

In what follows, we introduce an $A$-lattice invariant which will be fundamental for the present work. Let $\Lambda$ be an $A$-lattice of rank $r$. We let $\Eis_0(\Lambda):=-1$ and define \textit{the Eisenstein series of weight $k\in \ZZ_{\geq 1}$ for $\Lambda$} to be the infinite sum given by 
\begin{equation}\label{Eq:Eisenstein_Series}
\Eis_k(\Lambda):=\sum_{\substack{\lambda\in \Lambda\\\lambda\neq 0}}\frac{1}{\lambda^k}\in \CC_{\infty}.
\end{equation}
 Note that when $k$ is not divisible by $q-1$, we have $\Eis_k(\Lambda)=0$. Moreover, for a variable $X$ over $\mathbb{C}_{\infty}$, by \cite[(2.8)]{Gek88}, we get
\begin{equation}\label{E:eis1}
\frac{X}{\exp_{\phi^{\Lambda}}(X)}=1-\sum_{k=1}^{\infty}\Eis_k(\Lambda)X^k.
\end{equation}
Furthermore, as described in \cite[(2.9)]{Gek88}, we have the following relation between the Eisenstein series and the coefficients of the logarithm series $\log_{\phi^{\Lambda}}=\sum_{i\geq 0}\beta_i\tau^i$:
\begin{equation}\label{E:eis2}
\Eis_{q^k-q^j}(\Lambda)=-\beta_{k-j}^{q^j}, \ \ j,k\geq 0.
\end{equation}



\section{Drinfeld modular forms}
In this section, we review the notion of Drinfeld modular forms of arbitrary rank. We refer the reader to \cite{BBP18} for related details. 

For any integer $r\geq 2$, let $\mathbb{P}^{r-1}(\mathbb{C}_\infty)$ be the projective space of dimension $r-1$ with coefficients in $\CC_{\infty}$. We define the Drinfeld period domain
\[
\Omega^r=\mathbb{P}^{r-1}(\CC_{\infty})\setminus \{K_{\infty}\text{-rational hyperplanes}\}
\]
 and identify any of its elements as $\zz=(z_1,\dots,z_r)^{\tr}\in \CC_{\infty}^{r}$ whose entries are $K_{\infty}$-linearly independent and normalized so that $z_r=1$. 
 
For later use, we now briefly explain the rigid analytic structure of $\Omega^r$.  Let $H$ be a $K_\infty$-rational hyperplane in $\mathbb{P}^{r-1}(\mathbb{C}_\infty)$. We choose a \emph{unimodular} linear form
\[
\ell_H(X_1,\dots,X_r):=a_1X_1+\cdots+a_rX_r\in K_\infty[X_1,\dots,X_r]
\]
to be the defining equation of $H$, that is, $\max_{1\leq i\leq r}\{\inorm{a_i}\}=1$. Then setting $\ell_H(\zz):=a_1z_1+\dots+a_rz_r$, we see that $\inorm{\ell_{H}(\zz)}$ is well-defined for any $\zz\in\Omega^r$ and is independent of the choice of $\ell_H$. Let $|\mathfrak{y}|_{\infty}:=\max_{i=1}^r\inorm{y_i}$ where $\mathfrak{y}=(y_1,\dots,y_r)^{\tr}\in \mathbb{C}_{\infty}^r$. For each $n\geq 1$, we define
\[
\Omega_n^r:=  \{\zz\in\Omega^r\mid\inorm{\ell_H(\zz)}\geq q^{-n}|\zz|_{\infty} \ \ \text{for any $K_{\infty}$-rational hyperplane $H$}\}.
\]
In fact, $\{\Omega_n^r\}_{n=1}^{\infty}$ forms an admissible covering of $\Omega^r$ \cite[Prop.~3.4]{BBP18} (cf. \cite[Prop.~1]{SS91}). 


For any $\gamma=(a_{ij})\in \GL_r(K_\infty)$, define
\begin{equation}\label{E:action}
\gamma \cdot \zz:=\Big(\frac{a_{11}z_1+\dots+ a_{1r}z_r}{a_{r1}z_1+\dots+ a_{rr}z_r},\dots,\frac{a_{(r-1)1}z_1+\dots +a_{(r-1)r}z_r}{a_{r1}z_1+\dots+ a_{rr}z_r},1\Big)^{\tr}\in \Omega^r.
\end{equation}
We further set
\[
j(\gamma,\zz):=a_{r1}z_1+\dots+ a_{rr}z_r\in \CC_{\infty}^{\times}.
\]

For any $\mathfrak{n}\in A\setminus\{0\}$, we define \textit{a principal congruence subgroup} $\Gamma(\mathfrak{n})$ to be the kernel of the projection $\GL_{r}(A)\mapsto \GL_{r}(A/\mathfrak{n}A)$. Note that $\Gamma(1)=\GL_r(A)$. Moreover, we call $\Gamma\subseteq \GL_{r}(A)$ \textit{a congruence subgroup} if it contains $\Gamma(\mathfrak{n})$ for some non-zero $\mathfrak{n}$. A rigid analytic function $f:\Omega^r\to \CC_{\infty}$ is called \textit{a weak modular form  of weight $k\in \ZZ$ and type $m\in \ZZ/(q-1)\ZZ$ for $\Gamma$}  if it satisfies
\[
f(\gamma\cdot \zz)=j(\gamma,\zz)^k\det(\gamma)^{-m}f(\zz),\ \  \gamma\in \Gamma, \ \ \zz \in \Omega^r.
\]

Consider the map 
$
\iota:K^{r-1}\to \GL_r(K)$ given by
\[
\iota:(a_2,\dots,a_r)\mapsto
\left(\begin{smallmatrix}
1 & a_2 & \cdots &a_r\\
& 1 & &\\
& & \ddots &\\
& & & 1
\end{smallmatrix}\right).
\]
For a congruence subgroup $\Gamma$, we define $\Gamma_{\iota}:=\Gamma\cap\iota(K^{r-1})$ and say that  $f:\Omega^r\to\mathbb{C}_\infty$ is \textit{$\Gamma_{\iota}$-invariant} if  $f(\gamma\cdot \zz)=f(\zz)$ for all $\gamma\in \Gamma_{\iota}$. Note that any weak modular form $f$ of weight $k$ and type $m$ for $\Gamma$ is automatically $\Gamma_{\iota}$-invariant. For $\zz=(z_1,\dots,z_r)^{\tr}\in\Omega^r$, we set $\ww=(z_2,\dots,z_r)^{\tr}$. Note that  $\iota^{-1}(\Gamma_{\iota})\cap A^{r-1}$ is of finite index in $A^{r-1}$ and $\iota^{-1}(\Gamma_{\iota})$. Hence 
\[
\tilde{\Gamma}\ww:=\{z_2a_2+\cdots+z_{r}a_{r}\mid (a_2,\dots,a_{r})\in\iota^{-1}(\Gamma_{\iota})\}\subset\mathbb{C}_\infty
\]
forms an $A$-lattice of rank $r-1$ in $\mathbb{C}_{\infty}$. For any $A$-lattice $\Lambda$ and $c\in \mathbb{C}_{\infty}^{\times}$, we let $c\Lambda:=\{c\lambda \ \ | \ \ \lambda\in \Lambda\}$. Now considering $\tilde{\pi}\tilde{\Gamma}\ww\subset \CC_{\infty}$, we also define 
\[
u_\Gamma(\zz):=e_{\tilde{\pi}\tilde{\Gamma}\ww}(\tilde{\pi}z_1)^{-1}\in \CC_{\infty}^{\times}.
\]
When $\Gamma$ is a principal congruence subgroup, we can describe $u_{\Gamma}$ more precisely as follows: Let $\mathfrak{n}\in A\setminus\{0\}$ and $A^{r-1}\ww$ be the $A$-lattice of rank $r-1$ generated by the entries of $\ww$ over $A$. Then we have
\[
u_{\Gamma(\mathfrak{n})}(\zz)=\exp_{\tilde{\pi}\mathfrak{n}A^{r-1}\ww}(\tilde{\pi}z_1)^{-1}.
\]
Since $\exp_{c\Lambda}(cz)=c\exp_{\Lambda}(z)$ for any $c\in \mathbb{C}_{\infty}^{\times}$ and $z\in \mathbb{C}_{\infty}$ (see \cite[Sec. 4]{Gek89}), we have
\begin{equation}\label{E:cong}
u_{\Gamma(\mathfrak{n})}(\zz)=\exp_{\tilde{\pi}\mathfrak{n}A^{r-1}\ww}(\tilde{\pi}z_1)^{-1}=\mathfrak{n}^{-1}\tilde{\pi}^{-1}\exp_{\phi^{\ww}}(\mathfrak{n}^{-1}z_1)^{-1}
\end{equation}
where we set $\phi^{\ww}$ to be the Drinfeld module corresponding to the $A$-lattice $A^{r-1}\ww$. To ease the notation, we further let $u(\zz):=u_{\Gamma(1)}(\zz)$.


By \cite[Prop. 5.4]{BBP18} we know that for any $\Gamma_{\iota}$-invariant rigid analytic function $f:\Omega^r\to \CC_{\infty}$, for each $n\in \ZZ$, there exists a unique rigid analytic function $f_n:\Omega^{r-1}\to \CC_{\infty}$  such that the series
\begin{equation}\label{E:expansion}
\sum_{n\in \ZZ}f_n(\ww)u_\Gamma(\zz)^n
\end{equation}
converges to $f(\zz)$ on some neighborhood of infinity and its admissible subsets (see \cite[Def. 4.12]{BBP18} for the explicit definition of a neighborhood of infinity). We call the infinite sum given in \eqref{E:expansion} \textit{the $u_{\Gamma}$-expansion of $f$}. Note that when $r=2$,  $f_n$ is a constant in $\CC_{\infty}$ for each $n$. 

For any $\nu \in \GL_{r}(A)$, we know that if $f$ is a weak modular form of weight $k$ and type $m$ for $\Gamma$, then the rigid analytic function $f_{|_{k,m}}[\nu]:\Omega^r\to \CC_{\infty}$ given by $f_{|_{k,m}}[\nu](\zz):=j(\nu,\zz)^{-k}\det(\nu)^mf(\nu\cdot \zz)$ is a weak modular form of weight $k$ and type $m$ for the congruence subgroup $\nu^{-1}\Gamma \nu$. Moreover, we call $f$ \textit{a Drinfeld modular form  of weight $k$ and type $m$ for $\Gamma$} if the function $f_n$ in the $u_{\nu^{-1}\Gamma \nu}$-expansion of $f_{|_{k,m}}[\nu]$ for all $\nu \in \GL_{r}(A)$ is identically zero when $n<0$, in other words, $f$ is \textit{holomorphic at infinity with respect to $(\nu^{-1}\Gamma \nu)_{\iota}$}. Furthermore, we call $f$ \textit{a Drinfeld cusp form  of weight $k$ and type $m$ for $\Gamma$} if, in addition, $f_0\equiv 0$ in the $u_{\nu^{-1}\Gamma \nu}$-expansion of $f_{|_{k,m}}[\nu]$ for each $\nu \in \GL_{r}(A)$. We define $M^m_k(\Gamma)$ to be the $\mathbb{C}_\infty$-vector space spanned by all Drinfeld modular forms  of weight $k$ and type $m$ for $\Gamma$.

In what follows, we give some examples of Drinfeld modular forms.
\begin{example}\label{Ex:1}
	\begin{itemize}
		\item[(i)] Let $\zz=(z_1,\dots,z_r)^{\tr}\in \Omega^r$ and recall that $A^r\zz$ is the $A$-lattice of rank $r$ generated by the entries of $\zz$ over $A$. Let $k$ be a positive integer divisible by $q-1$ and $\zz\in \Omega^r$. It is shown in \cite[Prop. 13.3]{BBP18} that the map $\Eis_k:\Omega^r\to \CC_{\infty}$ sending $\zz\mapsto \Eis_k(A^r\zz)$ is a Drinfeld modular form  of weight $k$ and type 0 for $\GL_r(A)$.
		\item[(ii)] Let $\phi^{\zz} $ be the Drinfeld module corresponding to $A^r\zz$. Set 
		\[
		\phi^{\zz}_\theta:=\theta+g_1(\zz)\tau+\dots+g_{r}(\zz)\tau^{r}.
		\]
		Then, for any $1\leq i \leq r$, the function $g_{i}:\Omega^r\to \CC_{\infty}$, which we call \textit{the $i$-th coefficient form}, mapping $\zz\mapsto g_i(\zz)$ is a Drinfeld modular form of weight $q^i-1$ and type 0 for $\GL_r(A)$ \cite[Prop. 15.12]{BBP18}. In addition, we remark that  $g_r$ is indeed a nowhere-vanishing Drinfeld cusp form on $\Omega^r$. Moreover, setting $g_0=\theta$ and $\Eis_{0}=-1$, by \cite[(2.10)]{Gek88}, we have 
		\begin{equation}\label{E:EisCoef}
		\theta\Eis_{q^i-1}(\zz)=\sum_{k=0}^{i}\Eis_{q^k-1}(\zz)g_{i-k}^{q^k}(\zz).
		\end{equation}

		\item[(iii)] In what follows, we review the definition of one fundamental example of Drinfeld cusp forms which was introduced  by Gekeler \cite{Gek17}. Let $\mu=(\mu_1,\dots,\mu_r) \in \mathbb{F}_q^{r}\setminus \{(0,\dots,0)\}$. We call $\mu$ \textit{monic} if $\mu_k=1$ for the largest subscript $k$ satisfying $\mu_k\neq 0$. For any  $\zz=(z_1,\dots,z_r)^{\tr}\in \Omega^r$, let
		\[
		\Eis_{\mu}(\zz):=\sum_{\substack{(a_1,\dots,a_r)\in K^{r}\\ (a_1,\dots,a_r)\equiv \theta^{-1}\mu \pmod{A^r} }}\frac{1}{a_1z_1+\dots+a_rz_r}.
		\]
		 Then \textit{the $h$-function $h_r:\Omega^r\to \CC_{\infty}$ of Gekeler} is defined by
		\[
	h_r(\zz):=\tilde{\pi}^{\frac{1-q^r}{q-1}}(-\theta)^{1/(q-1)}\prod_{\substack{\mu\in \mathbb{F}_q^{r}\\\mu \text{ monic}}}\Eis_{\mu}(\zz).
	\]
We note that $h_r$ is a nowhere-vanishing Drinfeld cusp form  of weight $q^r-1/(q-1)$ and type 1 for $\GL_r(A)$. In other words, for each $\gamma\in\GL_r(A)$, we have
		\begin{equation}\label{E:FEh}
		h_r(\gamma\cdot \zz)=\det(\gamma)^{-1}j(\gamma,\zz)^{\frac{q^r-1}{q-1}}h_r(\zz)	.
		\end{equation}
		Furthermore, by \cite[Thm. 3.8]{Gek17}, we obtain
		\begin{equation}\label{E:hfunc}
		g_r(\zz)=\tilde{\pi}^{q^r-1}(-1)^{r-1}h_r(\zz)^{q-1}.
		\end{equation}
	\end{itemize}
\end{example}

\section{The differential operator $\D_{(k_1,\dots,k_{r-1})}$} 

In this section, we show the existence of a differential operator which also generalizes the notion of \textit{the Serre derivation} due to Gekeler to the higher rank setting. 
Our main goal is to prove Theorem \ref{T:RS}. For this, in what follows, we will first establish several results which are needed to determine the regularity at  infinity of $\D_{(k_1,\dots,k_{r-1})}$ acting on the tuple of Drinfeld modular forms.

Let $\mathcal{Y}$ be an affinoid open subspace of $\Omega^r$ and $f:\mathcal{Y}\to\mathbb{C}_\infty$ be a rigid analytic function. We define
    \[
        \dnorm{f}_{\mathcal{Y}}:=\sup_{\mathbf{y}\in\mathcal{Y}}\{|f(\mathbf{y})|\}.
    \]
    Note that by \cite[\S3.8 Cor. 2]{BGR84}, we have $\dnorm{f}_{\mathcal{Y}}<\infty $ for any affinoid open subspace $\mathcal{Y}$ and any rigid analytic function $f$ on $\mathcal{Y}$ \cite[\S5]{BBP18}.
Recall the admissible covering $\{\Omega_n^r\}_{n=1}^{\infty}$ of $\Omega^r$ given in \S3. For each $n\geq 1$, we set
$
\dnorm{f}_n:=\dnorm{f}_{\Omega_n^r}.
$ For any $\zz\in\Omega_n^r$ and $\eta>0$, we consider the closed polydisk $D(\zz,\eta)$ centered at $\zz$ and of radius $\eta |\zz|_{\infty}$ given by 
    \[
        D(\zz,\eta):=\{\yy\in\Omega^{r}\mid|\zz-\yy|_{\infty}\leq\eta|\zz|_{\infty}\}.
    \]
    Our next lemma provides an estimation of the size of the partial derivatives for rigid analytic functions on $\Omega_n^r$.
    \begin{lemma}\label{L:Bnd1}
        Let $f:\Omega_n^r\to\mathbb{C}_\infty$ be a rigid analytic function. Then for each $\zz\in\Omega_n^r$ and $\eta\in\mathbb{Q}$ with $0<\eta<q^{-n}$, we have
        \[
            D(\zz,\eta)\subset\Omega_n^r.
        \]
        Furthermore, for any $1\leq j\leq r-1$, we have
        \[
            \dnorm{\partial_jf}_{D(\zz,\eta)}\leq\eta^{-1}\dnorm{f}_n .
        \]
    \end{lemma}
    \begin{proof}        Let $\zz=(z_1,\dots,z_r)^{\tr}\in\Omega_n^r$ and $\yy=(y_1,\dots,y_r)^{\tr}\in D(\zz,\eta)$. Then
        for any $K_\infty$-rational hyperplane $H$ with $\ell_H(X_1,\dots,X_r)=a_1X_1+\cdots+a_rX_r$, we have
        \begin{align*}
        \inorm{\ell_H(\yy)}&=\inorm{a_1y_1+\cdots+a_ry_r}\\
            &=\inorm{a_1(y_1-z_1)+\cdots+a_r(y_r-z_r)+a_1z_1+\cdots +a_rz_r}\\
            &=|\ell_H(\zz)|\\
            &\geq q^{-n}|\zz|_{\infty}
        \end{align*}
        where the third and forth equality follow from the facts that
        \[
            \inorm{a_1(y_1-z_1)+\cdots+a_r(y_r-z_r)}\leq\max_{1\leq i\leq r}\{|a_i||y_i-z_i|\}\leq\eta|\zz|_\infty<q^{-n}|\zz|_{\infty}
        \]
        and
        \[
            \inorm{a_1z_1+\cdots+a_rz_r}=\inorm{\ell_H(\zz)}\geq q^{-n}|\zz|_{\infty}.
        \]
On the other hand, we claim that $|\yy|_{\infty}\leq |\zz|_{\infty}$. To prove our claim, assume to the contrary that $|\yy|_{\infty}> |\zz|_{\infty}$. Then there exists $1\leq i \leq r$ such that $|\yy|_{\infty}=|y_i|>|z_j|$ for any $1\leq j \leq r$. However this implies that 
\[
|\zz-\yy|_{\infty}=|z_i-y_i|=|y_i|=|\yy|_{\infty}>|\zz|_{\infty}
\]
which would contradict to the fact that $\yy \in D(\zz,\eta)$ for $0<\eta<q^{-n}$. Thus, we have 
\[
\inorm{\ell_H(\yy)}\geq q^{-n}|\zz|_{\infty}\geq q^{-n}|\yy|_{\infty}.
\]
Hence, $\yy\in \Omega_n^r$, which also implies that $D(\zz,\eta)\subset\Omega_n^r$. For the second assertion, note that, since $f$ is a rigid analytic function on $\Omega_n^r$, its restriction to  $D(\zz,\eta)$ can be expressed as
        \[
            f|_{D(\zz,\eta)}=\sum_{I=(i_1,\dots,i_r)\in\mathbb{Z}_{\geq 0}^r}C_I(X_1-z_1)^{i_1}\cdots(X_r-z_r)^{i_r},
        \]
        where $C_I\in\mathbb{C}_\infty$ and $C_I\left(\eta|\zz|_\infty\right)^{|I|}\to 0$ as $|I|:=i_1+\cdots+i_r\to\infty$. In particular, since $D(\zz,\eta)\subset\Omega_n^r$, by the maximum modulus principle \cite[\S2.2,~Prop.~5]{Bos14},  we have $|C_I|\leq \eta^{-|I|}|\zz|_\infty^{-|I|}\|f\|_{D(\zz,\eta)}\leq \eta^{-|I|}|\zz|_\infty^{-|I|}\|f\|_{n}$. Thus,
        \begin{align*}
            \dnorm{\partial_jf}_{D(\zz,\eta)}&=\sup_{\yy\in D(\zz,\eta)}\{\big|\sum_{I=(i_1,\dots,i_r)\in\mathbb{Z}_{\geq 0}^r}i_jC_I(y_1-z_1)^{i_1}\cdots(y_j-z_j)^{i_j-1}\cdots(y_r-z_r)^{i_r}\big|\}\\
            &\leq\sup_{I=(i_1,\dots,i_r)\in\mathbb{Z}_{\geq 0}^r,~i_j\geq 1}\{|C_I|\eta^{|I|-1}|\zz|_\infty^{|I|-1}\}\\
            &\leq\eta^{-1}|\zz|_\infty^{-1}\dnorm{f}_n.
        \end{align*}
        Finally, $|\zz|_{\infty}\geq 1$ yields the desired inequality.
    \end{proof}

\subsection{Certain identities and functional equations} Our goal in this subsection is to prove Proposition \ref{P:1} which has a crucial importance to prove Theorem \ref{T:I2}. We would like to thank the anonymous referee for a simpler argument of its proof which we provide with details in what follows. We refer the reader to the previous version of the manuscript \cite[Prop.~4.23]{CG22} for an alternative idea for the proof of Proposition \ref{P:1}. 

Throughout this subsection, we let $r\geq 2$,  $\zz=(z_1,z_2,\dots,,z_r)^{\tr}\in \Omega^r$ and hence $\ww=(z_2,\dots,z_r)^{\tr}\in \Omega^{r-1}$. For $\gamma=(a_{ij})_{i,j}\in \GL_r(\mathbb{C}_{\infty})$, $1\leq i \leq r$ and $\zz\in \Omega^r$, we set
\[
\delta_i(\gamma,\zz):=a_{i1}z_1+\cdots+a_{ir}z_r.
\]
Observe that when $\gamma\in \GL_r(A)$ and $\zz\in \Omega^r$, we have $\delta_r(\gamma,\zz)=j(\gamma,\zz)$. For $1\leq j,l\leq r-1$, we also set 
\[
\mathfrak{c}^{\gamma}_{jl}(\zz):=c^{\gamma}_{jl}-c_{jr}^{\gamma}z_l
\]
where $c_{jl}^{\gamma}$ (resp. $c_{jr}^{\gamma}$) is the $(j,l)$-cofactor (resp. $(j,r)$-cofactor) of $\gamma$.

\begin{lemma}\label{L:det} We have 
\[
\det((a_{ij}\delta_r(\gamma,\zz)-a_{rj}\delta_i(\gamma,\zz))_{1\leq i,j\le r-1})=\delta_r(\gamma,\zz)^{r-2}\det(\gamma)z_r=j(\gamma,\zz)^{r-2}\det(\gamma).
\]
\end{lemma}
\begin{proof}
 For any $1\leq \ell\leq r$, set 
$
a_\ell:=(a_{1\ell},\dots,a_{(r-1)\ell})^{\tr}$
and $
\delta(\gamma,\zz)=(\delta_1(\gamma,\zz),\dots,\delta_{r-1}(\gamma,\zz))^{\tr}
$
so that 
\[
\delta(\gamma,\zz)=z_1a_1+\cdots+z_ra_r.
\]
Observe that 
\begin{multline}\label{E:Eq18}
\det((a_{ij}\delta_r(\gamma,\zz)-a_{rj}\delta_i(\gamma,\zz))_{i,j})\\
=|a_1\delta_r(\gamma,\zz)-a_{r1}\delta(\gamma,\zz) \ \ a_2\delta_r(\gamma,\zz)-a_{r2}\delta(\gamma,\zz) \ \ \cdots \ \ a_{r-1}\delta_r(\gamma,\zz)-a_{r(r-1)}\delta(\gamma,\zz)|.
\end{multline}
Here, by the notation in the right hand side of \eqref{E:Eq18}, we mean the determinant of the matrix whose $j$-th column is given by $a_{j}\delta_r(\gamma,\zz)-a_{rj}\delta(\gamma,\zz)\in \Mat_{(r-1)\times 1}(\mathbb{C}_{\infty})$. We now expand the right hand side of \eqref{E:Eq18} by using the multi-linearity of determinants and we choose either $a_j\delta_r(\gamma,\zz)$ or $-a_{rj}\delta(\gamma,\zz)$. It is crucial to note that choosing $-a_{rj}\delta(\gamma,\zz)$ twice yields the corresponding term to vanish. Thus, we obtain
\begin{multline}\label{E:Eq19}
    |a_1\delta_r(\gamma,\zz)-a_{r1}\delta(\gamma,\zz) \ \ a_2\delta_r(\gamma,\zz)-a_{r2}\delta(\gamma,\zz) \ \ \cdots \ \ a_{r-1}\delta_r(\gamma,\zz)-a_{r(r-1)}\delta(\gamma,\zz)|\\
    =\delta_{r}(\gamma,\zz)^{r-1}|a_1 \ \ \cdots \ \ a_{r-1}|-\delta_{r}(\gamma,\zz)^{r-2}a_{r1}|\delta(\gamma,\zz) \ \ a_2 \ \ \cdots \ \ a_{r-1}|\\
    -\delta_{r}(\gamma,\zz)^{r-2}a_{r2}|a_1 \ \ \delta(\gamma,\zz) \ \ a_3 \ \ \cdots \ \ a_{r-1}|
    -\cdots-\delta_{r}(\gamma,\zz)^{r-2}a_{r(r-1)}|a_1 \ \ \cdots \ \ a_{r-2}\ \ \delta(\gamma,\zz)|.  
    \end{multline}
    On the other hand, observe, by the cofactor expansion of the determinant along the $r$-th row, the right hand side of \eqref{E:Eq19} is equal to
    \begin{multline}\label{E:Eq20}
    \delta_r(\gamma,\zz)^{r-2}\det\begin{pmatrix}
        a_{11}&\cdots &a_{1(r-1)}&\delta_1(\gamma,\zz)\\
        \vdots & &\vdots & \vdots\\
        a_{(r-1)1}&\cdots &a_{(r-1)(r-1)}&\delta_{r-1}(\gamma,\zz)\\
        a_{r1}&\cdots &a_{r(r-1)}&\delta_{r}(\gamma,\zz)
        \end{pmatrix}\\=\delta_r(\gamma,\zz)^{r-2}\det\begin{pmatrix}
        a_{11}&\cdots &a_{1(r-1)}&z_ra_{1r}\\
        \vdots & &\vdots & \vdots\\
        a_{(r-1)1}&\cdots &a_{(r-1)(r-1)}&z_ra_{(r-1)r}\\
        a_{r1}&\cdots &a_{r(r-1)}&z_ra_{rr}
        \end{pmatrix}.    
        \end{multline}
       Since, the right hand side of \eqref{E:Eq20} is equal to $\delta_r(\gamma,\zz)^{r-2}\det(\gamma)z_r$, combining \eqref{E:Eq18} and \eqref{E:Eq19}, we finally have
       \[
       \det((a_{ij}\delta_r(\gamma,\zz)-a_{rj}\delta_i(\gamma,\zz))_{i,j})=\delta_r(\gamma,\zz)^{r-2}\det(\gamma)z_r       \]
and thus it finishes the proof of the first equality. The second equality simply follows from the definition and the fact that $z_r=1$.
\end{proof}

Now we are ready to prove our proposition.
\begin{proposition}\label{P:1} For any $\gamma\in \GL_r(A)$, define $\mathfrak{C}^{\gamma}(\zz):=(\mathfrak{c}^{\gamma}_{jl}(\zz))_{j,l}\in \Mat_{r-1}(\CC_{\infty})$. The following statements hold.
	\begin{itemize}
    \item[(i)] $\mathfrak{C}^{\gamma^{-1}}(\gamma\cdot \zz)^{-1}=\mathfrak{C}^{\gamma}(\zz)$.
      \item[(ii)]  \[
	\sum_{\ell=1}^{r-1}\mathfrak{c}_{i\ell}^{\gamma}(\zz)c_{\ell r}^{\gamma^{-1}}=-j(\gamma,\zz)\det(\gamma)^{-1}c_{ir}^{\gamma}.
	\]		
         \item[(iii)] $\det(\mathfrak{C}^{\gamma}(\zz))=\det(\gamma)^{r-2}j(\gamma,\zz)$.
         \end{itemize}
\end{proposition}
\begin{proof} We start with proving the first assertion. Note that, since the formula $\gamma^{-1}=\det(\gamma)^{-1}(c_{ji}^{\gamma})_{i,j}$ implies $\gamma=(\gamma^{-1})^{-1}=(\det(\gamma^{-1}))^{-1}(c_{ji}^{\gamma^{-1}})_{i,j}$, we have 
\begin{equation}\label{E:Eq11}
    a_{ij}=\det(\gamma)c_{ji}^{\gamma^{-1}}.
\end{equation}
Moreover, setting $\partial(\gamma\cdot \zz):=\left(\partial_i\left(\frac{\delta_j(\gamma,\zz)}{j(\gamma,\zz)}\right)\right)_{i,j}$, we obtain
\begin{equation}\label{E:Eq12}
    \begin{split}
        \partial(\gamma\cdot\zz)&=\left(\partial_i\left(\frac{a_{j1}z_1+\cdots+a_{jr}z_r}{j(\gamma,\zz)}\right)\right)_{i,j} \\
&=\left(\frac{a_{ji}j(\gamma,\zz)-a_{ri}\delta_j(\gamma,\zz)}{j(\gamma,\zz)^2}\right)_{i,j}\\
&=\frac{\det(\gamma)}{j(\gamma,\zz)}\left(c_{ij}^{\gamma^{-1}}-c_{ir}^{\gamma^{-1}}\frac{\delta_j(\gamma\cdot\zz)}{j(\gamma,\zz)}\right)_{i,j}\\
&=\frac{\det(\gamma)}{j(\gamma,\zz)}(\mathfrak{c}_{ij}^{\gamma^{-1}}(\gamma\cdot\zz))_{i,j}\\
&=\frac{\det(\gamma)}{j(\gamma,\zz)}\mathfrak{C}^{\gamma^{-1}}(\gamma\cdot\zz).
\end{split}
\end{equation}
Here, the third equality follows from \eqref{E:Eq11} and the fourth equality follows from the fact that $j(\gamma,\zz)^{-1}\delta_j(\gamma,\zz)$ is the $j$-th entry, which we denote by $(\gamma\cdot \zz)_j$, of $\gamma\cdot \zz$. 

On the other hand, for any $\gamma,\nu\in \GL_r(A)$, by \cite[Lem. 3.1.3]{Bas14}, we know that 
\begin{equation}\label{E:Eq13}
j(\gamma\nu,\zz)=j(\gamma,\nu\cdot \zz)j(\nu,\zz).
\end{equation}
Applying the chain rule to $\gamma \nu=\gamma \circ \nu$, we obtain
\[
\frac{\det(\gamma\nu)}{j(\gamma\nu,\zz)}(\mathfrak{C}^{(\gamma\nu)^{-1}}(\gamma\nu\cdot \zz))^{\tr}=\frac{\det(\gamma)}{j(\gamma,\nu\cdot\zz)}(\mathfrak{C}^{\gamma^{-1}}(\gamma\cdot(\nu\cdot\zz)))^{\tr}\frac{\det(\nu)}{j(\nu,\zz)}(\mathfrak{C}^{\nu^{-1}}(\nu\cdot \zz))^{\tr},
\]
which, by \eqref{E:Eq13}, yields
\[
(\mathfrak{C}^{(\gamma\nu)^{-1}}(\gamma\nu\cdot \zz))^{\tr}=(\mathfrak{C}^{\gamma^{-1}}(\gamma\cdot(\nu\cdot\zz)))^{\tr}(\mathfrak{C}^{\nu^{-1}}(\nu\cdot \zz))^{\tr}.
\]
This implies
\begin{equation}\label{E:Eq14}
    \mathfrak{C}^{\gamma^{-1}}(\gamma\cdot\zz)\mathfrak{C}^{\gamma}(\zz)=\mathfrak{C}^{\Id_{r-1}}(\zz)=\Id_{r-1}\end{equation}
and hence we deduce the first assertion.

We now show the second assertion. Using \eqref{E:Eq13}, we have
\begin{equation}\label{E:Eq15}
    j(\gamma^{-1},\gamma\cdot \zz)=\frac{1}{j(\gamma,\zz)}.
\end{equation}
Note also that 
\[
j(\gamma^{-1},\zz)=\det(\gamma)^{-1}(c_{1r}^{\gamma}z_1+\cdots+c_{rr}^{\gamma}z_r)=\det(\gamma)^{-1}(c_{1r}^{\gamma}z_1+\cdots+c_{rr}^{\gamma}).
\]
We also obtain 
\begin{equation}\label{E:Eq16}
    \begin{split}(\partial_1(j(\gamma^{-1},\gamma\cdot\zz)),\dots,\partial_{r-1}(j(\gamma^{-1},\gamma\cdot\zz)))&=\left(\frac{c_{1r}^{\gamma}}{\det(\gamma)},\dots,\frac{c_{r-1,r}^{\gamma}}{\det(\gamma)}\right)(\partial(\gamma\cdot \zz))^{\tr}\\
    &=\frac{1}{j(\gamma,\zz)}(c_{1r}^{\gamma},\dots,c_{(r-1)r}^{\gamma})(\mathfrak{C}^{\gamma^{-1}}(\gamma\cdot\zz))^{\tr}    \end{split}
\end{equation}
where the first equality follows from the chain rule and the last equality follows from \eqref{E:Eq12}. Finally, we have
\begin{align*}
    (c_{1r}^{\gamma},\dots,c_{(r-1)r}^{\gamma})&=j(\gamma,\zz)(\partial_1(j(\gamma^{-1},\gamma\cdot\zz)),\dots,\partial_{r-1}(j(\gamma^{-1},\gamma\cdot\zz)))(\mathfrak{C}^{\gamma}(\zz))^{\tr} \\
    &=j(\gamma,\zz)\left(-\frac{a_{r1}}{j(\gamma,\zz)^2},\dots,-\frac{a_{r(r-1)}}{j(\gamma,\zz)^2}\right)(\mathfrak{C}^{\gamma}(\zz))^{\tr}\\
    &=-j(\gamma,\zz)^{-1}\det(\gamma)(c_{1r}^{\gamma^{-1}},\dots,c_{(r-1)r}^{\gamma^{-1}})(\mathfrak{C}^{\gamma}(\zz))^{\tr}.
    \end{align*}
    Here, the first equality follows from \eqref{E:Eq14} and \eqref{E:Eq16}, the second equality follows from \eqref{E:Eq15} and the last equality follows from \eqref{E:Eq11}. In other words, we have 
    \begin{equation}\label{E:Eq17}
        (c_{1r}^{\gamma^{-1}},\dots,c_{(r-1)r}^{\gamma^{-1}})(\mathfrak{C}^{\gamma}(\zz))^{\tr}=-j(\gamma,\zz)\det(\gamma)^{-1}(c_{1r}^{\gamma},\dots,c_{(r-1)r}^{\gamma}).        \end{equation}
Finally, comparing the $i$-th entry of both sides of \eqref{E:Eq17} yields the second assertion.

Lastly, observe that
\begin{align*}
    \det(\mathfrak{C}^{\gamma^{-1}}(\gamma\cdot\zz)^{\tr})&=\det(c_{ji}^{\gamma^{-1}}-c_{jr}^{\gamma^{-1}}(\gamma\cdot \zz)_{i})_{i,j}\\
    &=\det\left(\det(\gamma)^{-1}\left(a_{ij}-a_{rj}\frac{\delta_{i}(\gamma,\zz)}{j(\gamma,\zz)}\right)_{i,j}\right)\\
    &=\det(\gamma)^{1-r}j(\gamma,\zz)^{1-r}\det(a_{ij}j(\gamma,\zz)-a_{rj}\delta_i(\gamma,\zz))_{i,j}\\
    &=\det(\gamma)^{1-r}j(\gamma,\zz)^{1-r}j(\gamma,\zz)^{r-2}\det(\gamma)\\
    &=\det(\gamma)^{2-r}j(\gamma,\zz)^{-1}.
\end{align*}
Here, the second equality follows from \eqref{E:Eq11} and the third equality follows from Lemma \ref{L:det}. 
Then, the last assertion simply follows from the first assertion.
\end{proof}

We finish this section by describing functional equations of several rigid analytic functions, including our false Eisenstein series defined in \S1 and partial derivatives of Drinfeld modular forms, under the action of $\GL_r(A)$ on $\Omega^r$.

\begin{lemma}\label{L:1} Let $f\in M_{k}^m(\Gamma)$ for some congruence subgroup $\Gamma \leqslant \GL_r(A)$, $k\in \ZZ_{\geq 0}$ and $m\in \ZZ/(q-1)\ZZ$. For any $1\leq i,j \leq r-1$, the following identities hold.
	\begin{itemize}
		\item[(i)]  For any $\gamma\in \GL_{r}(A)$,  $E_r^{[i]}(\gamma\cdot \zz)=j(\gamma,\zz)\det(\gamma)^{-1}\Big(\sum_{l=1}^{r-1}E_r^{[l]}(\zz)\mathfrak{c}^{\gamma}_{il}(\zz)+c^{\gamma}_{ir}\Big)$.
		\item[(ii)] For any $\gamma\in \Gamma$, we have
		\[
		 \partial_{i}(f(\gamma\cdot\zz))=j(\gamma,\zz)^{k+1}\det(\gamma)^{-m-1}\Big(\sum_{l=1}^{r-1}\partial_{l}(f(\zz))\mathfrak{c}^{\gamma}_{il}(\zz)-kc^{\gamma}_{ir}f(\zz)\Big).
		 \]
		\item[(iii)] Consider the operator $\mathcal{D}_{i,k}$ given by  $\mathcal{D}_{i,k}(f)=\partial_{i}(f)+kE_r^{[i]}f$. For any $\gamma\in \Gamma$, we have
		\[
		\mathcal{D}_{i,k}(f)(\gamma\cdot\zz)=j(\gamma,\zz)^{k+1}\det(\gamma)^{-m-1}\sum_{l=1}^{r-1}\mathcal{D}_{l,k}(f)(\zz)\mathfrak{c}^{\gamma}_{il}(\zz).
		\]
	\end{itemize}	
\end{lemma}
\begin{proof} The first two identities basically follow from the same method used in \cite[Sec. 4]{CG21}. Since, by assumption, we have 
\[
f(\gamma\cdot \zz)=j(\gamma,\zz)^k\det(\gamma)^{-m}f(\zz),
\]
the third part is a consequence of (i) and (ii) as well as the fact that $\partial_i$ is a $\CC_{\infty}$-linear derivation on the set of rigid analytic functions on $\Omega^r$. 
\end{proof}

\subsection{Proof of Theorem \ref{T:I2}}
In this subsection, we provide a proof for Theorem \ref{T:I2}. For this goal, in what follows, we first need some preliminary lemmas. We would like to thank the anonymous referee once again for informing us about a simpler approach to prove Proposition \ref{P:Hol} and we provide its details in what follows. We refer the reader to the previous version of the manuscript \cite[\S5.1 and Prop.~5.23]{CG22} for a different idea to prove Proposition \ref{P:Hol}.

Let $\zz\in \Omega^r$ and $\{y_1,\dots,y_r\}$ be a successive minimum basis of $A^{r}\zz$ in the sense of Gekeler \cite[\S3]{Gek17b}. We further set $\delta:=\min\{|y_i| \ \ | \ \ i=1,\dots,r\}>0$. Let $\mathfrak{B}\in \GL_r(A)$ be such that 
\begin{equation}\label{E:Eq1}
    (z_1,\dots,z_r)=(y_1,\dots,y_r)\mathfrak{B}.
\end{equation}
For any $M\in \Mat_{\ell\times n}(\mathbb{C}_{\infty})$, we let $|M|$ be the maximum of the norm of entries of $M$. 

\begin{lemma}\label{L:bound} For any $k\in \mathbb{Z}_{\geq 1}$, $1\leq j \leq r$ and any tuple $\mathfrak{a}=(a_1,\dots,a_r)\in A^{r}\setminus \{(0,\dots,0)\}$, we have 
\[
\left|\frac{a_j}{(a_1z_1+\dots+a_rz_r)^k}  \right|\leq \frac{|\mathfrak{B}^{-1}|}{\delta^k|\mathfrak{B}\mathfrak{a}^{\tr}|^{k-1}}.
\]
\end{lemma}
\begin{proof} We set $\mathfrak{c}:=(c_1,\dots,c_r)^{\tr}:=\mathfrak{B}\mathfrak{a}^{\tr}$. Since $\mathfrak{B}\in \GL_r(A)$ and $\mathfrak{a}\neq (0,\dots,0)$, we have $\mathfrak{c}\neq (0,\dots,0)^{\tr}$. Moreover, by \eqref{E:Eq1}, we have 
\[
a_1z_1+\dots+a_rz_r=(y_1,\dots,y_r)\mathfrak{B}\mathfrak{a}^{\tr}=(y_1,\dots,y_r)\mathfrak{c}=\sum_{i=1}^{r}c_iy_i.
\]
    Thus, we have 
    \[
|a_1z_1+\dots+a_rz_r|=\max\{|y_i||c_i| \ \ | \ \ i=1,\dots,r\}\geq \delta \max_{1\leq i \leq r}|c_i|=\delta |\mathfrak{c}|
    \]
    where the first equality follows form \cite[Prop. 3.1(iii)]{Gek17b}. On the other hand, since $|\mathfrak{a}|=|\mathfrak{B}^{-1}\mathfrak{c}|\leq |\mathfrak{B}^{-1}||\mathfrak{c}| $, we further obtain
    \[
    \left| \frac{a_j}{(a_1z_1+\dots+a_rz_r)^k} \right|\leq \frac{|\mathfrak{B}^{-1}\mathfrak{c}|}{(\delta |\mathfrak{c}|)^k}\leq \frac{|\mathfrak{B}^{-1}||\mathfrak{c}|}{\delta^k |\mathfrak{c}|^{k}}=\frac{|\mathfrak{B}^{-1}|}{\delta^k|\mathfrak{c}|^{k-1}}= \frac{|\mathfrak{B}^{-1}|}{\delta^k |\mathfrak{B}\mathfrak{a}^{\tr}|^{k-1}}  \]
    as desired.
\end{proof}

\begin{lemma}\label{L:holEisenstein} For any integer $k\geq 2$ and $1\leq j \leq r$, the infinite sum 
\[
\sum_{\substack{(a_1,\dots,a_r)\in A^r\\(a_1,\dots,a_r)\neq (0,\dots,0)}}\frac{a_j}{(a_1z_1+\dots+a_rz_r)^k}\]
converges.
\end{lemma}
\begin{proof} Let $\mathfrak{a}$ be as in Lemma \ref{L:bound}. Since $|\mathfrak{a}|\to \infty$ if and only if $|\mathfrak{B}\mathfrak{a}^{\tr}|\to \infty$, the lemma follows from Lemma \ref{L:bound}.
    
\end{proof}

For any $\zz=(z_1,z_2,\dots,z_r)^{\tr}\in \Omega^r$, we set  
\[
|\zz|_{\text{im}}:=\mathrm{inf}\{\inorm{z_1-\mathfrak{a}}:\mathfrak{a}=a_2z_2+\dots+a_rz_r, \ \  a_2,\dots,a_r\in K_{\infty}\}.
\]
For $\ww\in \Omega^{r-1}$, let $\mathfrak{I}_{\ww}$ be the set of elements of the form $\zz=(z_1,\ww)\in \Omega^r$ satisfying $\inorm{z_1}=|\zz|_{\text{im}}$. 

\begin{lemma} \label{L:bounds2} Let $1\leq j \leq r-1$ and $\ww\in \Omega^{r-1}$. The following statements hold.
\begin{itemize}
    \item[(i)] Let $\mathfrak{n}\in A\setminus\{0\}$. We have 
    \[
    \lim_{\substack{\zz\in \mathfrak{I}_{\ww}\\ |\zz|_{\infty}\to\infty}}\partial_j(u_{\Gamma(\mathfrak{n})}(\zz))=0.
    \]
    \item[(ii)] For $j\geq 2$, the absolute value $|E_r^{[j]}(\zz)|$ is bounded independently of $z_1$ when $\zz\in \mathfrak{I}_{\ww}$ and $ |\zz|_{\infty}\to\infty$.
    \end{itemize}
\end{lemma}
\begin{proof}  Recall from \eqref{E:cong} that, up to a scalar multiple, $u_{\Gamma(\mathfrak{n})}(\zz)$ is obtained by replacing $z_1$ in $u(\zz)$ by $z_1/\mathfrak{n}$. Furthermore, it is easy to see that $(z_1,\ww)\in \mathfrak{I}_{\ww}$ if and only if $(z_1/\mathfrak{n},\ww)\in \mathfrak{I}_{\ww}$. Thus, to prove the first assertion, it suffices to show that 
\begin{equation}\label{E:Eq2}
\lim_{\substack{\zz\in \mathfrak{I}_{\ww}\\ |\zz|_{\infty}\to\infty}}\partial_j(u(\zz))=0.
\end{equation}
We first analyze the case $j=1$. By \cite[Rem. 3.22]{CG21}, when $(z_1,\ww)\in \mathfrak{I}_{\ww}$, we have 
\begin{equation}\label{E:Eq3}
    |\exp_{A^{r-1}\ww}(z_1)|=\left|z_1\prod_{\lambda\in A^{r-1}\ww\setminus\{0\}}\left(1-\frac{z_1}{\lambda}\right)\right|=|z_1|\prod_{0<|\lambda|<|z_1|}\frac{|z_1|}{|\lambda|}.
    \end{equation}
    Observe that $\partial_1(u(\zz))$ is a constant multiple of $u(\zz)^{2}$. Moreover, since the conditions $\zz\in \mathfrak{I}_{\ww}$ and $|\zz|_{\infty}\to \infty$ imply that $|z_1|\to \infty$, by \eqref{E:Eq3}, we have 
    \begin{equation}\label{E:Eq4}
        \lim_{\substack{\zz\in \mathfrak{I}_{\ww}\\ |\zz|_{\infty}\to\infty}}u(\zz)=0
        \end{equation}
    and hence, when $j=1$, we obtain \eqref{E:Eq2}.

    We now assume that $j\geq 2$. We claim that the norm of $\partial_j(u(\zz))/u(\zz)$ is bounded independent of $z_1$. Observe that, by \eqref{E:Eq4}, the claim then implies \eqref{E:Eq2} for $j\geq 2$. Hence, we are reduced to showing our claim. 

We write $\ww=(w_1,\dots,w_{r-1})^{\tr}\in \Omega^{r-1}$. Then we have 
    \begin{equation}\label{E:Eq5}
    \begin{split}
        \frac{\partial_j(u(\zz))}{u(\zz)}&=\exp_{A^{r-1}\ww}(z_1)\partial_j\left(\frac{1}{\exp_{A^{r-1}\ww}(z_1)}\right)\\
        &=\frac{\partial_j(\exp_{A^{r-1}\ww}(z_1))}{\exp_{A^{r-1}\ww}(z_1)}\\
        &=\sum_{\lambda\in A^{r-1}\ww\setminus\{0\}}\frac{\frac{z_1\partial_j(\lambda)}{\lambda^2}}{1-\frac{z_1}{\lambda}}\\
        &=\sum_{\substack{(a_1,\dots,a_r)\in A^{r-1}\\(a_1,\dots,a_{r-1})\neq (0,\dots,0)}}\frac{a_{j-1}}{(a_1w_1+\dots+a_{r-1}w_{r-1})^2}\frac{z_1}{1-\frac{z_1}{a_1w_1+\dots+a_{r-1}w_{r-1}}}.
  \end{split}
  \end{equation}
Here, the second equality follows from the chain rule and the third equality follows from the logarithmic derivative of $\exp_{A^{r-1}\ww}(z_1)$ with respect to the variable $w_j$. 

Let $\lambda=a_1w_1+\dots+a_{r-1}w_{r-1}\in A^{r-1}\ww$. Since, $|\zz|_{\text{im}}=|z_1|$, by using the same analysis in \cite[Rem. 3.22]{CG21}, we obtain 
\begin{equation}\label{E:Eq6}
\left|\frac{1}{1-\frac{z_1}{\lambda}}\right|=\begin{cases}
    \frac{|\lambda|}{|z_1|} & \text{ if }|\lambda|<|z_1| \\
    1 & \text{ if }|\lambda|\geq |z_1|.
\end{cases}
\end{equation}
Moreover, using \eqref{E:Eq6}, we obtain
\[
\left|\frac{z_1}{1-\frac{z_1}{\lambda}}\right|=\begin{cases}
    |\lambda| & \text{ if }|\lambda|<|z_1| \\
    |z_1| & \text{ if }|\lambda|\geq |z_1|,
\end{cases}
\]
  implying that
  \begin{equation}\label{E:Eq7}
      \left|\frac{z_1}{1-\frac{z_1}{\lambda}}\right|=\left|\frac{z_1}{1-\frac{z_1}{a_1w_1+\dots+a_{r-1}w_{r-1}}}\right|\leq |\lambda|=|a_1w_1+\dots+a_{r-1}w_{r-1}|.
      \end{equation}
Thus, by Lemma \ref{L:bound} and \eqref{E:Eq7}, we obtain
\begin{equation}\label{E:Eq8}
\left|\frac{a_{j-1}}{(a_1w_1+\dots+a_{r-1}w_{r-1})^2}\frac{z_1}{1-\frac{z_1}{a_1w_1+\dots+a_{r-1}w_{r-1}}}\right|\leq \frac{|a_{j-1}|}{|a_1w_1+\dots+a_{r-1}w_{r-1}|}\leq \frac{|\mathfrak{B}^{-1}|}{\delta}.
\end{equation}
      Now combining \eqref{E:Eq5} and \eqref{E:Eq8} yields the claim, hence finishing the proof of the first assertion.

      We now show the second assertion. For any sufficiently large $z_1$ and $\zz=(z_1,\ww)\in \mathfrak{I}_{\ww}$, we write
      \[
      h_r(\zz)=\sum_{i=1}^{\infty}c_i(\ww)u(\zz)^i
      \]
      for some uniquely determined rigid analytic functions $c_i:\Omega^{r-1}\to \mathbb{C}_{\infty}$. By \cite[Cor. 6.3]{BB17}, we know that $c_1$ is a constant multiple of the Gekeler's $h$-function $h_{r-1}$ which is nowhere vanishing on $\Omega^{r-1}$. Then, by \eqref{E:Eq4}, we have 
      \[
       \lim_{\substack{\zz\in \mathfrak{I}_{\ww}\\ |\zz|_{\infty}\to\infty}}\frac{h_r(\zz)}{u(\zz)}=c_1(\ww)\neq 0
       \]
       implying that 
       \begin{equation}\label{E:Eq9}
         \lim_{\substack{\zz\in \mathfrak{I}_{\ww}\\ |\zz|_{\infty}\to\infty}}\frac{u(\zz)}{h_r(\zz)}=c_1(\ww)^{-1}.           
       \end{equation}
      On the other hand, by \eqref{E:hfunc} and the chain rule, we obtain
       \begin{multline*}
       E_r^{[j]}(\zz)=\frac{\partial_j(g_r(\zz))}{g_r(\zz)}=\frac{\partial_j(h_r^{q-1}(\zz))}{h_r^{q-1}(\zz)}=-\frac{\partial_j(h_r(\zz))}{h_r(\zz)}\\
       =-\frac{1}{h_r(\zz)}\left(\sum_{i=1}^{\infty}\partial_j(c_i(\ww))u(\zz)^i+ \partial_j(u(\zz))\sum_{i=1}^{\infty}c_i(\ww)iu(\zz)^{i-1} \right).
       \end{multline*}
       Finally, since the norm of $\partial_j(u(\zz))/u(\zz)$ is bounded independent of $z_1$, the desired statement follows from \eqref{E:Eq4} and \eqref{E:Eq9}.
\end{proof}

 For each $1\leq \ell \leq r-1$, let $f_\ell\in M_{k_\ell}^{m_\ell}(\Gamma_\ell)$ for some congruence subgroup $\Gamma_\ell \leqslant \GL_r(A)$, $k_\ell\in \ZZ_{\geq 0}$ and $m_\ell\in \ZZ/(q-1)\ZZ$. We further let  $\Gamma':=\cap_{\ell=1}^{r-1}\Gamma_\ell$. Set $\mathfrak{k}:= k_1+\dots+k_{r-1}+r$ and $\mathfrak{m}:=m_1+\dots+m_{r-1}+1$. Recall the operator $\D_{(k_1,\dots,k_{r-1})}$ on $M_{k_1}^{m_1}(\Gamma_1)\times \cdots \times M_{k_{r-1}}^{m_{r-1}}(\Gamma_{r-1})$ defined by 
	\begin{equation}\label{E:operator}
	\D_{(k_1,\dots,k_{r-1})}(f_1,\dots,f_{r-1})=\det \begin{pmatrix}
	\mathcal{D}_{1,k_1}(f_1)&\cdots &\mathcal{D}_{r-1,k_1}(f_1)\\
	\vdots & & \vdots  \\
	\mathcal{D}_{1,k_{r-1}}(f_{r-1})& \cdots &\mathcal{D}_{r-1,k_{r-1}}(f_{r-1})
	\end{pmatrix}.
	\end{equation}
Furthermore, we set $\mathcal{D}^{(\ell,1)}$ to be the $(\ell,1)$-cofactor of the matrix \[
\begin{pmatrix}
	\mathcal{D}_{1,k_1}(f_1)&\cdots &\mathcal{D}_{r-1,k_1}(f_1)\\
	\vdots & & \vdots  \\
	\mathcal{D}_{1,k_{r-1}}(f_{r-1})& \cdots &\mathcal{D}_{r-1,k_{r-1}}(f_{r-1})
	\end{pmatrix}.
 \] 
 
Now we are ready to prove our next proposition.

\begin{proposition}\label{P:Hol} For each $1\leq \ell \leq r-1$, we have
	\[
	\lim_{\substack{\zz\in \mathfrak{I}_{\ww}\\ |\zz|_{\infty}\to\infty}}\mathcal{D}_{1,k_{\ell}}(f_{\ell})(\zz)\mathcal{D}^{(\ell,1)}(\zz)=0.
	\]
 In particular, 
 \[
\lim_{\substack{\zz\in \mathfrak{I}_{\ww}\\ |\zz|_{\infty}\to\infty}}\D_{(k_1,\dots,k_{r-1})}(f_1,\dots,f_{r-1})(\zz)=0.
\]
\end{proposition}
\begin{proof} There exists a polynomial $\mathfrak{n}\in A\setminus \{0\}$ such that $\Gamma(\mathfrak{n})$ is contained in $\Gamma'$. Moreover, for each $i\in \ZZ_{\geq 0}$, there exists a unique rigid analytic function $\mathfrak{F}_{i,\ell}:\Omega^{r-1}\to \CC_{\infty}$  such that 
	\[
	f_{\ell}(\zz)=\sum_{i=0}^{\infty}\mathfrak{F}_{i,\ell}(\ww)u_{\Gamma(\mathfrak{n})}(\zz)^{i}
	\]
	whenever $\zz$ is in some neighborhood of infinity. We note, by the infinite sum expansion of $E_r^{[1]}$ given in \cite[(4.2)]{CG21}, that $E_r^{[1]}$ has a $u=u_{\Gamma(1)}$-expansion with no constant term. Moreover, $\partial_1(u_{\Gamma(\mathfrak{n})}(\zz))$ is a constant
multiple of $u_{\Gamma(\mathfrak{n})}(\zz)^2$. On the other hand, since $u(\zz)$ can be written as an infinite series of $u_{\Gamma(\mathfrak{n})}(\zz)$ with no constant term, we deduce that for $k\in \mathbb{Z}_{\geq 1}$, $\mathcal{D}_{1,k_{\ell}}(f_{\ell})$ has a $u_{\Gamma(\mathfrak{n})}$-expansion with no constant term. Furthermore, letting $2\leq j \leq r-1$ and using  the definition of the operator $\mathcal{D}_{j,k_{\ell}}$, we obtain
\begin{equation}\label{E:partialanalysis}
	\mathcal{D}_{j,k_\ell}(f_\ell(\zz))=\sum_{i=0}^{\infty} \partial_{j}(\mathfrak{F}_{i,\ell})(\ww)u_{\Gamma(\mathfrak{n})}(\zz)^{i}+\sum_{i=1}^{\infty} \mathfrak{F}_{i,\ell}(\ww)\partial_{j}(u_{\Gamma(\mathfrak{n})}^{i})(\zz)+k_{\ell}f_{\ell}(\zz)E_r^{[j]}(\zz).
 \end{equation}
 Then the first part of the proposition follows from Lemma \ref{L:bounds2}, \eqref{E:Eq4} and \eqref{E:partialanalysis}. The second assertion simply follows from the first assertion and the cofactor expansion of determinants along the first column.
\end{proof}

We are now ready to prove the main result of this section.

\begin{theorem}\label{T:RS} Let $\D_{(k_1,\dots,k_{r-1})}(f_1,\dots,f_{r-1})$ be the operator defined as in \eqref{E:operator}. The following statements hold.
	\begin{itemize}
		\item[(i)] $\D_{(k_1,\dots,k_{r-1})}$ is a $\CC_{\infty}$-multi-linear derivation. In other words, for any $f_i,\tilde{f}_i\in M_{k_i}^{m_i}(\Gamma_i)$, we have
		\begin{multline*}
		\D_{(k_1,\dots,k_{r-1})}(f_1,\dots,f_{i-1},f_i+\tilde{f}_i,f_{i+1},\dots,f_{r-1})\\=\D_{(k_1,\dots,k_{r-1})}(f_1,\dots,f_{i-1},f_i,f_{i+1},\dots,f_{r-1})+\D_{(k_1,\dots,k_{r-1})}(f_1,\dots,f_{i-1},\tilde{f}_i,f_{i+1},\dots,f_{r-1})
		\end{multline*}
		and for any $f\in M_{k_i}^{m_i}(\Gamma_i)$ and $g\in M_{k_j}^{m_j}(\Gamma_i)$, we have	\begin{multline*}
		\D_{(k_1,\dots,k_i+k_j,\dots,k_{r-1})}(f_1,\dots,f_{i-1},fg,f_{i+1},\dots,f_{r-1})\\=g\D_{(k_1,\dots,k_i,\dots,k_{r-1})}(f_1,\dots,f_{i-1},f,f_{i+1},\dots,f_{r-1})\\+f\D_{(k_1,\dots,k_j,\dots,k_{r-1})}(f_1,\dots,f_{i-1},g,f_{i+1},\dots,f_{r-1}).
		\end{multline*}
		\item[(ii)]  $\D_{(k_1,\dots,k_{r-1})}(f_1,\dots,f_{r-1})\in M_{\mathfrak{k}}^{\mathfrak{m}}(\Gamma')$.
	\end{itemize}
\end{theorem}
\begin{proof} To prove part (i), by Lemma \ref{L:1}(iii), observe for $1\leq \mu \leq r-1$ that 
	$
	\mathcal{D}_{\mu,k_i}(f_i+\tilde{f}_i)=\mathcal{D}_{\mu,k_i}(f_i)+\mathcal{D}_{\mu,k_i}(\tilde{f}_i)	
	$ 
	and
	$
	\mathcal{D}_{\mu,k_i+k_j}(fg)=g\mathcal{D}_{\mu,k_i}(f)+f\mathcal{D}_{\mu,k_j}(g).	
	$ 
	Thus the desired equalities follow from the multi-linearity in rows of the determinant function. 
	
	We now prove the second part. Note that using Lemma \ref{L:1}(iii) and Proposition \ref{P:1}(i), for any $\gamma \in \Gamma'$, we obtain
	\begin{align*}
	\D_{(k_1,\dots,k_{r-1})}(f_1,\dots,f_{r-1})(\gamma \cdot \zz)&=\det(\gamma)^{-\mathfrak{m}-r+2}j(\gamma,\zz)^{\mathfrak{k}-1}\det(\mathfrak{C}^{\gamma}(\zz))\D_{(k_1,\dots,k_{r-1})}(f_1,\dots,f_{r-1})(\zz)\\
	&=\det(\gamma)^{-\mathfrak{m}}j(\gamma,\zz)^{\mathfrak{k}}\D_{(k_1,\dots,k_{r-1})}(f_1,\dots,f_{r-1})(\zz).
	\end{align*} 
	 Therefore $\D_r(f_1,\dots,f_{r-1})(\zz)$ is a weak modular form of weight $\mathfrak{k}$ and type $\mathfrak{m}$ for $\Gamma'$. 
	
	Let $\nu=(a_{ij})_{ij}\in\GL_r(A)$ and consider the congruence subgroup $\tilde{\Gamma}:=\nu^{-1}\Gamma'\nu\leq \GL_{r}(A)$. There exists a non-constant element $\mathfrak{n}\in A$ such that $\Gamma(\mathfrak{n})\subseteq \tilde{\Gamma}.$  We claim that the function 
	\[
	\D_{(k_1,\dots,k_{r-1})}(f_1,\dots,f_{r-1})|_{\mathfrak{k},\mathfrak{m}}[\nu](\zz)=j(\nu,\zz)^{-\mathfrak{k}}\det(\nu)^{\mathfrak{m}}\D_{(k_1,\dots,k_{r-1})}(f_1,\dots,f_{r-1})(\nu\cdot \zz)
	\]
	is holomorphic at infinity with respect to $\Gamma(\mathfrak{n})_{\iota}$. 
	
	Firstly, for any $1\leq j \leq r-1$, set 
\begin{equation}\label{E:CMF}
f_j(\nu\cdot \zz)=j(\nu,\zz)^{k_j}\det(\nu)^{-m_j}\mathfrak{g}_j(\zz)
\end{equation}
where $\mathfrak{g}_j$ is a Drinfeld modular form of weight $k_j$ and type $m_j$ for $\nu^{-1}\Gamma_j\nu\leq  \GL_r(A)$ (see \cite[Prop. 6.6]{BBP18}). Applying $\partial_i$ to both sides of \eqref{E:CMF} for $1\leq i \leq r-1$ and using the chain rule, we obtain
\begin{multline*}
\partial_i(f_j(\nu\cdot \zz))\\
    =\partial_1(f_j)(\nu\cdot \zz)\left(\frac{a_{1i}}{j(\nu,\zz)}-\frac{a_{ri}\delta_1(\nu,\zz)}{j(\nu,\zz)^2}\right)+\cdots +\partial_{r-1}(f_j)(\nu\cdot \zz)\left(\frac{a_{(r-1)i}}{j(\nu,\zz)}-\frac{a_{ri}\delta_{r-1}(\nu,\zz)}{j(\nu,\zz)^2}\right)\\
    =j(\nu,\zz)^{k_j}\det(\nu)^{-m_j}\partial_i(\mathfrak{g}_j)(\zz)+k_ja_{ri}j(\nu,\zz)^{k_j-1}\det(\nu)^{-m_j}\mathfrak{g}_j(\zz).
\end{multline*}
Recall that the $j$-th entry $(\nu\cdot \zz)_j$ of $\nu\cdot \zz$ is $j(\nu,\zz)^{-1}\delta_j(\nu,\zz)$. Thus, using \eqref{E:Eq11} and multiplying both sides of above with $j(\nu,\zz)$, we obtain
\begin{multline*}
    \partial_1(f_j)(\nu\cdot \zz)(c^{\nu^{-1}}_{i1}-c^{\nu^{-1}}_{ir}(\nu\cdot \zz)_1)+\cdots  +\partial_{r-1}(f_j)(\nu\cdot \zz)(c^{\nu^{-1}}_{i(r-1)}-c^{\nu^{-1}}_{ir}(\nu\cdot \zz)_{r-1})\\
    =(\partial_1(f_j)(\nu\cdot \zz),\dots ,\partial_{r-1}(f_j)(\nu\cdot \zz))\begin{pmatrix}\mathfrak{c}^{\nu^{-1}}_{i1}(\nu\cdot \zz)\\\vdots\\
    \mathfrak{c}^{\nu^{-1}}_{i(r-1)}(\nu\cdot \zz)\end{pmatrix}\\
    =j(\nu,\zz)^{k_j+1}\det(\nu)^{-m_j-1}\partial_i(\mathfrak{g}_j)(\zz)+k_jj(\nu,\zz)^{k_j}\det(\nu)^{-m_j}c^{\nu^{-1}}_{ir}\mathfrak{g}_j(\zz).
\end{multline*}
That is, using \eqref{E:CMF}, we now have
\begin{multline*}
    (\partial_1(f_j)(\nu\cdot \zz),\dots ,\partial_{r-1}(f_j)(\nu\cdot \zz))(\mathfrak{C}^{\nu^{-1}}(\nu\cdot \zz))^{\tr}\\
    =(j(\nu,\zz)^{k_j+1}\det(\nu)^{-m_j-1}\partial_1(\mathfrak{g}_j)(\zz)+k_jc^{\nu^{-1}}_{1r}f_j(\nu\cdot\zz)\\
    ,\dots,j(\nu,\zz)^{k_j+1}\det(\nu)^{-m_j-1}\partial_{r-1}(\mathfrak{g}_j)(\zz)+k_jc^{\nu^{-1}}_{(r-1)r}f_j(\nu\cdot\zz)).
\end{multline*}
Using Proposition \ref{P:1}(i), we further obtain
\begin{multline*}
    (\partial_1(f_j)(\nu\cdot \zz),\dots ,\partial_{r-1}(f_j)(\nu\cdot \zz))\\
    =(j(\nu,\zz)^{k_j+1}\det(\nu)^{-m_j-1}\partial_1(\mathfrak{g}_j)(\zz)+k_jc^{\nu^{-1}}_{1r}f_j(\nu\cdot\zz)\\
    ,\dots,j(\nu,\zz)^{k_j+1}\det(\nu)^{-m_j-1}\partial_{r-1}(\mathfrak{g}_j)(\zz)+k_jc^{\nu^{-1}}_{(r-1)r}f_j(\nu\cdot\zz))(\mathfrak{C}^{\nu}(\zz))^{\tr}.
\end{multline*}
Expanding out above and using Proposition \ref{P:1}(ii), we finally have
\begin{equation}\label{E:DER3}
\begin{split}
\partial_i(f_j)(\nu\cdot \zz)&=j(\nu,\zz)^{k_j}\det(\nu)^{-m_j}\left(j(\nu,\zz)\det(\nu)^{-1}\sum_{l=1}^{r-1}\partial_l(\mathfrak{g}_j)(\zz)\mathfrak{c}_{il}^{\nu}(\zz)\right)+k_jf_j(\nu\cdot\zz)\sum_{k=1}^{r-1}\mathfrak{c}_{ik}^{\nu}(\zz)c_{kr}^{\nu^{-1}}\\
&=j(\nu,\zz)^{k_j+1}\det(\nu)^{-m_j-1}\sum_{l=1}^{r-1}\partial_l(\mathfrak{g}_j)(\zz)\mathfrak{c}_{il}^{\nu}(\zz)-k_jj(\nu,\zz)\det(\nu)^{-1}f_j(\nu\cdot \zz)c_{ir}^{\nu}.
\end{split}
\end{equation}
Thus, Lemma \ref{L:1}(i) and \eqref{E:DER3} imply that 
\begin{equation}\label{E:Der4}
\mathcal{D}_{i,k_j}(f_j)(\nu\cdot \zz)=(\partial_i(f_j)+k_jf_jE_r^{[i]})(\nu\cdot \zz)=j(\nu,\zz)^{k_j+1}\det(\nu)^{-m_j-1}\sum_{l=1}^{r-1}\mathcal{D}_{l,k_j}(\mathfrak{g}_j(\zz))\mathfrak{c}_{il}^{\nu}(\zz).
\end{equation}
Hence, by Proposition \ref{P:1}(iii) and \eqref{E:Der4}, we obtain  
\begin{equation}\label{E:cuspanalysis}
\D_{(k_1,\dots,k_{r-1})}(f_1,\dots,f_{r-1})|_{\mathfrak{k},\mathfrak{m}}[\nu]=\det \begin{pmatrix}
\mathcal{D}_{1,k_1}(\mathfrak{g}_1)&\cdots &\mathcal{D}_{r-1,k_1}(\mathfrak{g}_1)\\
\vdots & & \vdots  \\
\mathcal{D}_{1,k_{r-1}}(\mathfrak{g}_{r-1})& \cdots &\mathcal{D}_{r-1,k_{r-1}}(\mathfrak{g}_{r-1})
\end{pmatrix}.
\end{equation}
 Since, for each $1\leq j \leq r-1$, $\mathfrak{g}_j$ is a Drinfeld modular form for $\Gamma(\mathfrak{n})$, the above discussion implies that $\D_{(k_1,\dots,k_{r-1})}(f_1,\dots,f_{r-1})|_{\mathfrak{k},\mathfrak{m}}[\nu]$ is a weak modular form of weight $\mathfrak{k}$ and type $\mathfrak{m}$ for $\Gamma(\mathfrak{n})$. Therefore, for any $\zz\in \Omega^r$ in some neighborhood of infinity, there exists $n_0\in \ZZ$ and for each $n\geq n_0$, a unique rigid analytic function $F_n:\Omega^{r-1}\to \CC_{\infty}$ such that 
\[
\D_{(k_1,\dots,k_{r-1})}(f_1,\dots,f_{r-1})|_{\mathfrak{k},\mathfrak{m}}[\nu](\zz)=\sum_{n\geq n_0}F_n(\ww)u_{\Gamma(\mathfrak{n})}(\zz)^{n}.
\]
Assume that $n_0<0$. Then we have 
\begin{multline*}
0= \lim_{\substack{\zz\in \mathfrak{I}_{\ww}\\ |\zz|_{\infty}\to\infty}}\D_{(k_1,\dots,k_{r-1})}(f_1,\dots,f_{r-1})|_{\mathfrak{k},\mathfrak{m}}[\nu](\zz)=\lim_{\substack{\zz\in \mathfrak{I}_{\ww}\\ |\zz|_{\infty}\to\infty}}\sum_{n\geq n_0}F_n(\ww)u_{\Gamma(\mathfrak{n})}(\zz)^{n}\to \infty
\end{multline*}
where the equality on the left hand side follows from \eqref{E:cuspanalysis} and Proposition \ref{P:Hol}, and the fact that the right hand side approaches to the infinity follows from our assumption on $n_0$. This gives us a contradiction and therefore $n_0$ must be a non-negative integer. Thus, $\D_{(k_1,\dots,k_{r-1})}(f_1,\dots,f_{r-1})|_{\mathfrak{k},\mathfrak{m}}[\nu]$ is holomorphic at infinity with respect to $\Gamma(\mathfrak{n})_{\iota}$. Hence $\D_{(k_1,\dots,k_{r-1})}(f_1,\dots,f_{r-1})$ is a Drinfeld modular form for $\Gamma(\mathfrak{n})$. Finally, using \cite[Prop. 5.14]{BBP18}, we see that $\D_{(k_1,\dots,k_{r-1})}(f_1,\dots,f_{r-1})$ is a Drinfeld modular form with respect to $\tilde{\Gamma}$ and hence so is with respect to $\Gamma'$. 
\end{proof}
\begin{remark} We highlight the work of Pellarin \cite[Sec. 6.1]{Pel05} providing a multi-linear differential operator by using the partial derivatives of Hilbert modular forms which may be compared with our construction.
\end{remark}

\section{Partial derivatives of Drinfeld modular forms}
Our first goal in this section is to prove Theorem \ref{T:GM2} which establish formulas for the partial derivatives of coefficient forms described in Example \ref{Ex:1}(ii) and the $h$-function $h_r$. Later on, we  provide an alternative description for Gekeler's $h$-function $h_r$ in Proposition \ref{P:hfunc}. Lastly, we will describe a $\mathbb{C}_{\infty}$-algebra $\mathcal{M}_r$ invariant under the partial derivatives and provide an alternative description for it.
\subsection{The Tate algebra and Anderson generating functions}
Let $t$ be a variable over $\CC_{\infty}$. We define \textit{the Tate algebra} $\mathbb{T}$ as the subring of power series in $\CC_{\infty}[[t]]$ satisfying a certain condition:
\[
\mathbb{T}:=\left\{\sum_{i\geq 0} c_it^i\in \CC_{\infty}[[t]] \ \ | \ \ \inorm{c_i}\to 0 \text{ as } i\to \infty\right\}.
\]
For any $g=\sum_{j\geq 0}g_jt^j\in \TT$, we define \textit{the Gauss norm of $g$} by 
\[
\dnorm{g}:=\sup\{ \inorm{c_j} \ \ | \ \ j\in \ZZ_{\geq 0}\}.
\]
Note that $\TT$, equipped with the Gauss norm, forms a Banach algebra. Furthermore, for any $i\in \ZZ$ and $g\in \TT$ as above, we define \textit{the $i$-th fold twist of $g$} to be $g^{(i)}:=\sum_{j\geq 0}g_j^{q^i}t^j\in \TT$.

We now briefly discuss the theory of Drinfeld $A[t]$-modules over $\TT$ introduced by Angl\`{e}s, Pellarin, and Tavares Ribeiro for the rank one case in \cite{AnglesPellarinTavares16} and later developed for arbitrary rank case by the second author and Papanikolas in \cite{GP19}. Let $\mathbb{F}_q[t]$ (resp. $A[t]$) be the ring of polynomials in $t$ with coefficients in $\mathbb{F}_q$ (resp. $A$). \textit{A Drinfeld $A[t]$-module $\psi$ of rank $r\in \ZZ_{\geq 1}$ over $\TT$} is an $\mathbb{F}_q[t]$-algebra homomorphism $\psi:A[t]\to \TT[\tau]$ given by 
\[
\psi_{\theta}:=\theta+g_1\tau+\dots+g_r\tau^r, \ \ g_r\neq 0. 
\]
For each $\psi$, there exists a unique \textit{exponential series} $\exp_{\psi}:=\sum_{i\geq 0}\gamma_i\tau^i\in \TT[[\tau]]$ satisfying $\gamma_0=1$ and for each $a\in A[t]$
\begin{equation}\label{E:FE1}
\exp_{\psi} a=\psi_a \exp_{\psi}.
\end{equation}
It induces an everywhere convergent function $\exp_{\psi}:\TT\to \TT$ given by $\exp_{\psi}(g)=\sum_{i\geq 0}\gamma_ig^{(i)}$ for any $g\in \TT$. Moreover, there exists a unique \textit{logarithm series $\log_{\psi}:=\sum_{i\geq 0}\ell_i\tau^i\in \TT[[\tau]]$ of $\psi$} which is the formal inverse of $\exp_{\psi}$ satisfying $\ell_0=1$ and for each $a\in A[t]$
\begin{equation}\label{E:FE2}
a\log_{\psi}=\log_{\psi}\psi_a.
\end{equation}
It is clear that any Drinfeld module $\phi$ over $\CC_{\infty}$ can be also considered as a Drinfeld $A[t]$-module over $\TT$ and hence one can consider the obvious extension $\exp_{\phi}:\TT\to \TT$ (resp. $\log_{\phi}: \TT \to \TT$) of the exponential function (resp. the logarithm function) of $\phi$.  For more details, we refer the reader to  \cite{GP19}.

Now we return to the theory of Drinfeld modules over $\CC_{\infty}$ and let $\phi$ be a Drinfeld module over $\CC_{\infty}$. For any $z\in \CC_{\infty}$, we define \textit{the Anderson generating function  $s_{\phi}(z,t)$ of $\phi$} by the infinite series
\[
s_{\phi}(z,t):=\sum_{i=0}^{\infty}\exp_{\phi}\Big(\frac{z}{\theta^{i+1}}\Big)t^i\in \mathbb{T}.
\]
By using the properties of the exponential series, it can be easily seen that $s_{\phi}(z,t)\in \TT$. Moreover, by \cite[Sec. 4.2]{Pel08}, we have the following series expansion 
\begin{equation}\label{E:AndPel}
s_{\phi}(z,t)=\sum_{j=0}^{\infty}\frac{\alpha_jz^{q^j}}{\theta^{q^j}-t}\in \CC_{\infty}[[t]]
\end{equation}
where $\alpha_j$ is the $j$-th coefficient of $\exp_{\phi}$. This indeed implies that as a function of $t$, $s_{\phi}(z,t)$ has poles at $t=\theta^{q^i}$ for each $i\in \ZZ_{\geq 0}$ with the residue $\Res_{t=\theta^{q^i}}s_{\phi}(z,t)=-\alpha_{i}z^{q^i}$.

The next proposition, due to Pellarin, is to derive a useful relation between $s_{\phi}(z,t)$ and certain quasi-periodic functions. Recall the rigid analytic functions $F_{\tau}^{\phi},\dots,F^{\phi}_{\tau^{r-1}}$ defined in \S1.
\begin{proposition}\cite[Sec. 4.2.2]{Pel08} \label{P:AGF} For each $1\leq k \leq r-1$, we have 
\[
F^{\phi}_{\tau^k}(z)=s_{\phi}^{(k)}(z,t)_{|t=\theta}.
\]
\end{proposition}

We finish this subsection by introducing a special element in $\TT$. We define \textit{the Anderson-Thakur element $\omega(t)$} by 
\[
\omega(t):=(-\theta)^{1/(q-1)}\prod_{i=0}^{\infty}\Big(1-\frac{t}{\theta^{q^i}}\Big)^{-1}\in \TT^{\times}.
\]
As a function of $t$, it has a pole at $t=\theta$ and furthermore we have
\begin{equation}\label{E:resomega}
-\Res_{t=\theta}\omega(t)=\theta(-\theta)^{1/(q-1)}\prod_{i=1}^{\infty}\Big(1-\theta^{1-q^i}\Big)^{-1}=\tilde{\pi}.
\end{equation}

\subsection{Partial derivatives of coefficient forms}

For any integer $r\geq 2$, recall that  $\Omega^r=\mathbb{P}^{r-1}(\CC_{\infty})\setminus \{K_{\infty}\text{-rational hyperplanes}\}$ is the Drinfeld upper half plane which can be identified as the set of elements $\zz=(z_1,\dots,z_r)^{\tr}\in \CC_{\infty}^r$ whose entries are $K_{\infty}$-linearly independent and normalized so that $z_r=1$. Throughout this section, we are mainly interested in partial derivatives $\partial_{i}$ of coefficient forms for each $1\leq i \leq r-1$.

Recall that $\phi^{\zz}$ is the Drinfeld module of rank $r$ over $\CC_{\infty}$ corresponding to the $A$-lattice of rank $r$ generated by the entries of $\zz=(z_1,\dots,z_r)^{\tr}\in \Omega^r$ over $A$ which is given by 
\[
\phi^{\zz}_{\theta}=\theta+g_1(\zz)\tau+\dots +g_{r-1}(\zz)\tau^{r-1}+g_{r}(\zz)\tau^{r}.
\]

For each $\bold{z}\in \Omega^r$, $k\in \ZZ_{\geq 1}$ and $1\leq j \leq r$, we define  
\[
\mathcal{G}^{[j]}_{\zz,k}(t):=\sum_{\substack{(a_1,\dots,a_r)\in A^r\\(a_1,\dots,a_r)\neq (0,\dots,0)}}\frac{a_j(t)}{(a_1z_1+\dots+a_rz_r)^k}\in \TT.
\]  

To ease the notation, for $k\geq 0$, we set $\Eis_k(\zz):=\Eis_{k}(A^{r}\zz)$. Recall that $\Eis_0(\zz)=-1$ and for $k\in \ZZ_{\geq 1}$, we have
	\[
	\Eis_k(\zz)=\sum_{\substack{(a_1,\dots,a_r)\in A^r\\(a_1,\dots,a_r)\neq (0,\dots,0)}}\frac{1}{(a_1z_1+\dots+a_{r-1}z_{r-1}+a_r)^k}\in \CC_{\infty}.
	\]
Then, by Lemma \ref{L:holEisenstein} and the definition of $\mathcal{G}^{[j]}_{\zz,k}(t)$, for any $k\geq 1$, we have 
\begin{equation}\label{E:partderEis}
    \partial_j(\Eis_{q^k-1})(\zz)=\mathcal{G}^{[j]}_{\zz,q^k}(t)|_{t=\theta}.
    \end{equation}

Next, we continue with investigating a certain relation between  $c_i(\zz)$ and the coefficients of the exponential and the logarithm series of $\phi$. Let $Z$ be an indeterminate over $\CC_{\infty}$. We now introduce a power series $\mathcal{T}(Z)\in \TT[[Z]]$ studied by Pellarin in \cite[Sec. 8]{Pel19} as follows. For any element $f=\sum_{i\geq 0}a_iZ^i\in \TT[[Z]]$ and $j\in \ZZ_{\geq 0}$, we let $\boldsymbol{\tau}^{j}(f):=\sum_{i\geq 0}a_i^{(j)}Z^{iq^j}$. Considering the exponential series $ \exp_{\phi}=\sum_{i\geq 0}\alpha_i(\zz)\tau^i$ of $\phi^{\zz}$, we have the following extension $\exp_{\phi^{\zz}}:\TT[[Z]]\to \TT[[Z]]$ of $\exp_{\phi^{\zz}}$, still denoted as $\exp_{\phi^{\zz}}$, and given  by $\exp_{\phi^{\zz}}(f):= \sum_{i\geq 0}\alpha_i(\zz)\boldsymbol{\tau}^{i}(f)$. We further set the coefficients $c_i(\zz)\in \TT$ so that
\[
\mathcal{T}(Z):=\sum_{i=0}^{\infty}c_i(\zz)Z^i:=\exp_{\phi^{\zz}}(Z)^{-1}\exp_{\phi^{\zz}}(Z/(\theta-t))\in \TT[[Z]].
\]

\begin{lemma}\label{L:Coef} Let  $\log_{\phi^{\zz}}=\sum_{i=0}^{\infty}\beta_i(\zz)\tau^i$ be the logarithm series of $\phi^{\zz}$. Then for each $i\geq 0$, we have 
	\[
	c_{q^i-1}(\zz)=\frac{\beta_i(\zz)}{\theta-t}+\frac{\alpha_1(\zz)\beta_{i-1}(\zz)^q}{\theta^q-t}+\dots+ \frac{\alpha_{i-1}(\zz)\beta_{1}(\zz)^{q^{i-1}}}{\theta^{q^{i-1}}-t}+\frac{\alpha_i(\zz)}{\theta^{q^i}-t}.
	\]
\end{lemma}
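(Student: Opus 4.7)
The plan is to reorganize $\mathcal{T}(Z)$ after the substitution $W:=\exp_{\phi^{\zz}}(Z)$, use Frobenius distributivity in characteristic $p$, and then extract the coefficient of $Z^{q^i-1}$. First I would unfold the definition: since $\boldsymbol{\tau}^j$ fixes $t$ and raises $\CC_\infty$-scalars to the $q^j$-th power, the extension of $\exp_{\phi^{\zz}}$ to $\TT[[Z]]$ gives the Anderson-type identity
\[
\exp_{\phi^{\zz}}(Z/(\theta-t))=\sum_{j\geq 0}\frac{\alpha_j(\zz)\,Z^{q^j}}{\theta^{q^j}-t},
\]
which by definition equals $\mathcal{T}(Z)\cdot W$.

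Next I would substitute $Z=\log_{\phi^{\zz}}(W)=\sum_{k\geq 0}\beta_k(\zz)\,W^{q^k}$ and re-expand the numerator in powers of $W$. By $\FF_q$-linearity of $\exp_{\phi^{\zz}}$ applied termwise together with the Frobenius additivity $\bigl(\sum_k \beta_k W^{q^k}\bigr)^{q^i}=\sum_k \beta_k^{q^i}W^{q^{i+k}}$ valid in characteristic $p$, a direct calculation yields
\[
\exp_{\phi^{\zz}}(Z/(\theta-t))=\sum_{n\geq 0}\Bigl(\sum_{i=0}^{n}\frac{\alpha_i(\zz)\,\beta_{n-i}(\zz)^{q^i}}{\theta^{q^i}-t}\Bigr)W^{q^n}=\sum_{n\geq 0}A_n(\zz)\,W^{q^n},
\]
where $A_n(\zz)$ is precisely the expression claimed for $c_{q^n-1}(\zz)$. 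Dividing by $W$ gives the compact identity $\mathcal{T}(Z)=\sum_{n\geq 0}A_n(\zz)\,W^{q^n-1}$.

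The remaining task is to read off the coefficient of $Z^{q^i-1}$. Since $W=Z+\alpha_1(\zz)Z^q+\alpha_2(\zz)Z^{q^2}+\cdots$, for $n=i$ the leading term of $W^{q^i-1}$ is $Z^{q^i-1}$ with coefficient $1$, and for $n>i$ the term $W^{q^n-1}$ contributes nothing to $Z^{q^i-1}$ by degree. For $n<i$ the vanishing is a characteristic-$p$ phenomenon. Any contribution comes from a tuple $(b_k)_{k\geq 0}$ of non-negative integers with $\sum_k b_k=q^n-1$ and $\sum_k b_k q^k=q^i-1$, weighted by the multinomial coefficient $\binom{q^n-1}{b_0,b_1,\dots}$. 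By Kummer's theorem this multinomial is nonzero modulo $p$ if and only if $\sum_k b_k$ is carry-free in base $p$. Under the no-carry hypothesis, reducing modulo $q$ gives $b_0\equiv-1\pmod q$, which combined with the carry-free digit sums forces $b_k\equiv 0\pmod q$ for $k\geq 1$; writing $b_k=q\tilde b_k$ for $k\geq 1$ and $b_0=(q-1)+q\tilde b_0$ reduces the problem to the analogous one with exponents $n-1$ and $i-1$, and iterating $n$ times yields a tuple with all coordinates zero that is required to satisfy $0=q^{i-n}-1>0$, a contradiction. Hence every contributing multinomial vanishes modulo $p$, so $[Z^{q^i-1}]W^{q^n-1}=0$ for $n<i$, and we conclude $c_{q^i-1}(\zz)=A_i(\zz)$. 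The main obstacle will be this combinatorial vanishing, which rests on a careful base-$p$ digit analysis via Kummer.
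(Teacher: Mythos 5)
Your proof is correct, but it takes a genuinely different route from the paper's. The paper multiplies the defining identity by $Z$, writes $Z\mathcal{T}(Z)=\exp_{\phi^{\zz}}(Z/(\theta-t))\cdot\frac{Z}{\exp_{\phi^{\zz}}(Z)}$, and then invokes two quoted facts: the expansion $\frac{Z}{\exp_{\phi^{\zz}}(Z)}=1-\sum_{k\geq 1}\Eis_k(\zz)Z^k$ from \eqref{E:eis1} and the relation $\Eis_{q^k-q^j}(\zz)=-\beta_{k-j}(\zz)^{q^j}$ from \eqref{E:eis2}; comparing coefficients of $Z^{q^i}$ then gives the lemma in two lines. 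You instead change variables to $W=\exp_{\phi^{\zz}}(Z)$, substitute $Z=\log_{\phi^{\zz}}(W)$ into the Anderson-type expansion of $\exp_{\phi^{\zz}}(Z/(\theta-t))$, and obtain $\mathcal{T}(Z)=\sum_n A_n(\zz)W^{q^n-1}$ directly in terms of the $\beta_k$'s, which you then convert back to the $Z$-expansion via the base-$p$ digit argument showing $[Z^{q^i-1}]W^{q^n-1}=\delta_{ni}$ for $n\leq i$. The trade-off is clear: the paper's route is shorter because the arithmetic content is outsourced to \eqref{E:eis1} and \eqref{E:eis2} (quoted from Basson and Gekeler), whereas your route is self-contained and never mentions Eisenstein series --- in effect you re-derive the content of \eqref{E:eis2} on the fly --- at the cost of the Kummer-type vanishing lemma, which is correct as you state it (the reduction $b_0=(q-1)+q\tilde b_0$, $b_k=q\tilde b_k$ and the descent on $(n,i)$ all check out) but is the most delicate step and deserves to be written out carefully if used. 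One cosmetic remark: in your Frobenius identity the exponent should be indexed by $j$ (the index of $\alpha_j$) rather than $i$, though the displayed double sum that follows is indexed correctly.
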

\begin{proof} By \eqref{E:eis1}, we obtain
	\begin{equation}\label{E:coef}
	\begin{split}
	Z\mathcal{T}(Z)&=\exp_{\phi^{\zz}}(Z/(\theta-t))\frac{Z}{\exp_{\phi^{\zz}}(Z)}\\
	&=\Big(\frac{1}{\theta-t}Z+\frac{\alpha_{1}(\zz)}{\theta^q-t}Z^q+\cdots \Big)\Big(1-\sum_{k\geq 1}\Eis_k(\zz)Z^k\Big)\\
	&=\sum_{i=0}^{\infty}c_i(\zz)Z^{i+1}.
	\end{split}
	\end{equation}	
	After comparing the coefficients of $Z^{q^i}$ in \eqref{E:coef}, we see that 
	\begin{equation}\label{E:coef2}
	c_{q^i-1}(\zz)=-\frac{\Eis_{q^i-1}(\zz)}{\theta-t}-\frac{\alpha_1(\zz)\Eis_{q^i-q}(\zz)}{\theta^q-t}-\dots-\frac{\alpha_{i-1}(\zz)\Eis_{q^i-q^{i-1}}(\zz)}{\theta^{q^{i-1}}-t}+\frac{\alpha_{i}(\zz)}{\theta^{q^i}-t}.
	\end{equation}
	On the other hand, by \eqref{E:eis2}, we know that $\Eis_{q^k-q^j}(\zz)=-\beta_{k-j}(\zz)^{q^j}$ for $k,j\geq 0$. Hence \eqref{E:coef2} becomes
	\[
	c_{q^i-1}(\zz)=\frac{\beta_i(\zz)}{\theta-t}+\frac{\alpha_1(\zz)\beta_{i-1}(\zz)^q}{\theta^q-t}+\dots+ \frac{\alpha_{i-1}(\zz)\beta_{1}(\zz)^{q^{i-1}}}{\theta^{q^{i-1}}-t}+\frac{\alpha_i(\zz)}{\theta^{q^i}-t}
	\]
	as desired.
\end{proof}	

The next proposition is fundamental to prove the main result of this subsection.
\begin{proposition}\label{P:2} For any $i\geq 1$, we have 
	\[
	(\theta^{q^i}-t)c_{q^i-1}(\zz)+g_1(\zz)^{q^{i-1}}c_{q^{i-1}-1}(\zz)+\dots+g_{i-1}(\zz)^qc_{q-1}(\zz)+g_i(\zz)c_0(\zz)=0.
	\]
	In particular, 
	\[
	g_i(\zz)=(t-\theta)((\theta^{q^i}-t)c_{q^i-1}(\zz)+g_1(\zz)^{q^{i-1}}c_{q^{i-1}-1}(\zz)+\dots+g_{i-1}(\zz)^qc_{q-1}(\zz)).
	\]
\end{proposition}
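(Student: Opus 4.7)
The plan is to substitute the explicit partial–fraction expansion from Lemma~\ref{L:Coef} into the left-hand side of the identity, clear to the common denominators $\theta^{q^j}-t$, and verify that each resulting residue vanishes thanks to the two standard recursions: one coming from $\exp_{\phi^{\zz}}\circ\log_{\phi^{\zz}}=\mathrm{id}$ and one from $\log_{\phi^{\zz}}\circ\phi^{\zz}_{\theta}=\theta\cdot\log_{\phi^{\zz}}$. No deformation-theoretic input seems necessary; everything reduces to bookkeeping.

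Concretely, write $c_{q^k-1}(\zz)=\sum_{j=0}^{k}\alpha_j\beta_{k-j}^{q^j}/(\theta^{q^j}-t)$ with $\alpha_j=\alpha_j(\zz)$, $\beta_j=\beta_j(\zz)$ and $\beta_0=1$ (so in particular $c_0=1/(\theta-t)$). For the first summand, I would use the decomposition $(\theta^{q^i}-t)/(\theta^{q^j}-t)=1+(\theta^{q^i}-\theta^{q^j})/(\theta^{q^j}-t)$ to split
\[
(\theta^{q^i}-t)c_{q^i-1}(\zz)=\sum_{j=0}^{i}\alpha_j\beta_{i-j}^{q^j}+\sum_{j=0}^{i-1}\alpha_j\beta_{i-j}^{q^j}\,\frac{\theta^{q^i}-\theta^{q^j}}{\theta^{q^j}-t},
\]
whose constant-in-$t$ part $\sum_{j=0}^{i}\alpha_j\beta_{i-j}^{q^j}$ is precisely the $\tau^{i}$-coefficient of $\exp_{\phi^{\zz}}\circ\log_{\phi^{\zz}}=\mathrm{id}$, hence vanishes for $i\geq 1$. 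Combining with the remaining $g_k^{q^{i-k}}c_{q^{i-k}-1}(\zz)$ terms and collecting over the common denominators $\theta^{q^j}-t$ for $0\leq j\leq i-1$ (the $j=i$ contribution is automatically zero), the coefficient of $1/(\theta^{q^j}-t)$ becomes
\[
\alpha_j\Bigl[\beta_{i-j}^{q^j}(\theta^{q^i}-\theta^{q^j})+\sum_{k=1}^{i-j}g_k^{q^{i-k}}\beta_{i-k-j}^{q^j}\Bigr].
\]
The key observation is that, using $\theta^{q^i}-\theta^{q^j}=(\theta^{q^{i-j}}-\theta)^{q^j}$ and $g_k^{q^{i-k}}=(g_k^{q^{i-k-j}})^{q^j}$, the bracket is the $q^j$-th power of
\[
\beta_m(\theta^{q^m}-\theta)+\sum_{k=1}^{\min(m,r)}g_k^{q^{m-k}}\beta_{m-k}, \qquad m:=i-j\geq 1,
\]
where we take $g_k=0$ for $k>r$. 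This last expression is zero by the \emph{log recursion}: setting $g_0=\theta$ and comparing the $\tau^m$-coefficients of $\log_{\phi^{\zz}}\phi^{\zz}_\theta=\theta\log_{\phi^{\zz}}$ yields $\sum_{k=0}^{\min(m,r)}g_k^{q^{m-k}}\beta_{m-k}=\theta\beta_m$, which rearranges to exactly the vanishing above.

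The ``in particular'' assertion then follows immediately by isolating $g_i(\zz)c_0(\zz)=g_i(\zz)/(\theta-t)$ in the main identity and multiplying through by $\theta-t$. I expect the only real obstacle to be the purely notational one of keeping track of which exponents line up so that the bracket is literally the $q^j$-th power of the log-recursion expression; the rest is a direct substitution. As a sanity check, the case $i=1$ collapses to $(\theta^{q}-t)\beta_1+g_1+\alpha_1(\theta-t)=0$, which splits into the coefficient-free identity $\beta_1+\alpha_1=0$ (from $\exp\log=\mathrm{id}$) and $\beta_1(\theta^q-\theta)+g_1=0$ (from $\log\phi_\theta=\theta\log$), in agreement with the general scheme above.
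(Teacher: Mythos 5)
Your proof is correct, but it takes a genuinely different route from the paper's. The paper forms the twisted power series $\mathcal{C}:=\exp_{\phi^{\zz}}\circ\frac{1}{\theta-t}\circ\log_{\phi^{\zz}}\in\TT[[\tau]]$, identifies its $\tau^i$-coefficient with $c_{q^i-1}(\zz)$ via Lemma~\ref{L:Coef}, and then observes that $\mathcal{C}\circ\phi^{\zz}_{\theta-t}=1$ because $\log_{\phi^{\zz}}\circ\phi^{\zz}_{\theta-t}=(\theta-t)\log_{\phi^{\zz}}$; the proposition is precisely the vanishing of the $\tau^i$-coefficient of that product for $i\geq 1$, and $c_0(\zz)=(\theta-t)^{-1}$ falls out of the same computation. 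You instead substitute the partial-fraction expansion of Lemma~\ref{L:Coef} into the left-hand side and annihilate the polynomial-in-$t$ part and each residue at $t=\theta^{q^j}$ separately, using respectively the coefficient identities of $\exp_{\phi^{\zz}}\circ\log_{\phi^{\zz}}=\mathrm{id}$ and of $\log_{\phi^{\zz}}\circ\phi^{\zz}_{\theta}=\theta\log_{\phi^{\zz}}$. These are the same two inputs the paper invokes, but applied pole by pole rather than once at the level of $\TT[[\tau]]$; your Frobenius bookkeeping ($\theta^{q^i}-\theta^{q^j}=(\theta^{q^{i-j}}-\theta)^{q^j}$, $g_k^{q^{i-k}}=(g_k^{q^{m-k}})^{q^j}$ with $m=i-j$, the cap $k\leq i-j$ on the inner sum, and the convention $g_k=0$ for $k>r$) all checks out, as does the $i=1$ sanity check and the derivation of the ``in particular'' clause from $c_0(\zz)=(\theta-t)^{-1}$. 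What the paper's argument buys is brevity and a conceptual reason for the identity (it is the coefficient expansion of a composition that telescopes to $1$); what yours buys is an explicit, elementary view of which recursion kills which residue, at the cost of heavier index management.
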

\begin{proof} Set $\mathcal{C}:=\exp_{\phi^{\zz}} \circ \frac{1}{\theta-t}\circ \log_{\phi^{\zz}}\in \TT[[\tau]].$ By Lemma \ref{L:Coef}, we have 
	\begin{equation}\label{E:coef3}
	\begin{split}
	\mathcal{C}&=\Big(\frac{1}{\theta-t}+\frac{\alpha_{1}(\zz)}{\theta^q-t}\tau+\frac{\alpha_{2}(\zz)}{\theta^{q^2}-t}\tau^2+\cdots \Big)(1+\beta_1(\zz)\tau+\beta_2(\zz)\tau^2+\cdots )\\
	&=\frac{1}{\theta-t}+\Big(\frac{\beta_1(\zz)}{\theta-t}+\frac{\alpha_{1}(\zz)}{\theta^q-t}\Big)\tau+\Big(\frac{\beta_2(\zz)}{\theta-t}+\frac{\alpha_{1}(\zz)\beta_{1}(\zz)^q}{\theta^q-t}+\frac{\alpha_{2}(\zz)}{\theta^{q^2}-t}\Big)\tau^2+\cdots+ \\
	&\ \ \ \ \ \Big(\frac{\beta_i(\zz)}{\theta-t}+\frac{\alpha_1(\zz)\beta_{i-1}(\zz)^q}{\theta^q-t}+\dots+ \frac{\alpha_{i-1}(\zz)\beta_{1}(\zz)^{q^{i-1}}}{\theta^{q^{i-1}}-t}+\frac{\alpha_i(\zz)}{\theta^{q^i}-t}\Big)\tau^i+\cdots\\
	&=\sum_{i=0}^{\infty}c_{q^i-1}(\zz)\tau^i.
	\end{split}
	\end{equation}
Since $\phi^{\zz}$ can be also considered as a Drinfeld $A[t]$-module over $\TT$,  using \eqref{E:FE2} and \eqref{E:coef3}, we obtain
\begin{equation}\label{E:coef4}
\begin{split}
1&=\mathcal{C}\circ \phi^{\zz}_{\theta-t}\\&=(c_0(\zz)+c_{q-1}(\zz)\tau+c_{q^2-1}(\zz)\tau^2+\cdots)(\theta-t+g_1(\zz)\tau+g_2(\zz)\tau^2+\dots+g_r(\zz)\tau^r)\\
&=(\theta-t)c_0(\zz)+((\theta^q-t)c_{q-1}(\zz)+g_1(\zz)c_0(\zz))\tau+\cdots+ \\
&\ \ \ \ ((\theta^{q^i}-t)c_{q^i-1}(\zz)+g_1(\zz)^{q^{i-1}}c_{q^{i-1}-1}(\zz)+\dots+g_{i-1}(\zz)^qc_{q-1}(\zz)+g_i(\zz)c_0(\zz))\tau^i+\cdots.
\end{split}
\end{equation}
Thus the first part of the proposition follows from comparing $\tau^i$-th coefficients in \eqref{E:coef4}. The second part of the proposition is a consequence of the first part and the fact that $c_0(\zz)=(\theta-t)^{-1}$ which can be easily deduced from \eqref{E:coef4}.
\end{proof}

\begin{remark} Let $\mathcal{C}$ be as in the proof of Proposition \ref{P:2}. Using \eqref{E:FE1}, it is clear that $\phi^{\zz}_{\theta-t}\circ \mathcal{C}=1$. Moreover, by using \eqref{E:coef3}, for each $i\geq 1$, one can obtain
	\begin{equation}\label{E:P}
	(\theta-t)c_{q^i-1}(\zz)+g_1(\zz)c_{q^{i-1}-1}(\zz)^{(1)}+\dots+g_{i-1}(\zz)c_1(\zz)^{(i-1)}+g_i(\zz)c_0(\zz)^{(i)}=0.
	\end{equation}
	We remark that one can provide another proof for \cite[Thm. 3.2]{CG21} by using the identity \eqref{E:P} and \cite[Prop. 3.18]{CG21}. We leave the details to the reader.
\end{remark}

Following the notation in \cite[Sec. 3]{CG21}, for each $1\leq i \leq r$, we set $s_i(\zz;t)$ to be the Anderson generating function  
$
s_i(\zz,t):=s_{\phi^{\zz}}(z_i,t)
$
and define the matrix
\begin{equation}\label{D:F}
\mathcal{F}(\zz,t):=\begin{pmatrix}
s_1(\zz,t)&\dots &s_1^{(r-1)}(\zz,t)\\
\vdots & & \vdots \\
s_r(\zz,t)&\dots &s_r^{(r-1)}(\zz,t)
\end{pmatrix}\in \Mat_{r}(\TT).
\end{equation}
By \cite[Prop. 3.4]{CG21}, $\mathcal{F}(\zz,t)$ is indeed an element in $\GL_{r}(\TT)$. For each $1\leq i,j\leq r$, we set $\tilde{L}_{ij}(\zz,t)$ to be the $(i,j)$-cofactor of $\mathcal{F}(\zz,t)$. 

Recall from \S3, the $h$-function of Gekeler $h_r:\Omega^r\to \mathbb{C}_{\infty}$.

\begin{lemma} We have 
	\begin{equation}\label{E:mat}
	\begin{split}
	&\begin{pmatrix}
	\mathcal{G}^{[1]}_{\zz,1}(t)&\dots &\mathcal{G}^{[r]}_{\zz,1}(t)\\
	\mathcal{G}^{[1]}_{\zz,q}(t)&\dots &\mathcal{G}^{[r]}_{\zz,q}(t)\\
	\vdots& &\vdots\\
	\mathcal{G}^{[1]}_{\zz,q^{r-1}}(t)&\dots &\mathcal{G}^{[r]}_{\zz,q^{r-1}}(t)
	\end{pmatrix}\\
	&=-\frac{\tilde{\pi}^{\frac{q^r-1}{q-1}}h_r(\zz)}{\omega(t)}\begin{pmatrix}
	c_0(\zz)&  & &\\
	c_{q-1}(\zz)&c_0(\zz)^{(1)} & & \\
	\vdots& &\ddots & \\
	c_{q^{r-1}-1}(\zz)&c_{q^{r-2}-1}(\zz)^{(1)} & \dots & c_0(\zz)^{(r-1)}
	\end{pmatrix}\begin{pmatrix}
	\tilde{L}_{11}(\zz,t)&\dots &\tilde{L}_{r1}(\zz,t)\\
	\tilde{L}_{12}(\zz,t)&\dots &\tilde{L}_{r2}(\zz,t)\\
	\vdots& &\vdots\\
	\tilde{L}_{1r}(\zz,t)&\dots &\tilde{L}_{rr}(\zz,t)
	\end{pmatrix}.
	\end{split}
	\end{equation}
\end{lemma}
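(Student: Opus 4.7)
The plan is to multiply both sides of identity~\eqref{E:mat} on the right by $\mathcal{F}(\zz,t)$. Since the rightmost matrix on the right-hand side is exactly the adjugate $\operatorname{adj}(\mathcal{F}(\zz,t))$, this multiplication collapses it to $\det\mathcal{F}(\zz,t)\cdot I_r$, reducing the asserted matrix equation to the family of scalar identities
\[
\sum_{j=1}^{r}\mathcal{E}^{[j]}_{\zz,q^i}(t)\,s_j^{(k)}(\zz,t)
\;=\;-\frac{\tilde{\pi}^{(q^r-1)/(q-1)}h_r(\zz)}{\omega(t)}\,c_{q^{i-k}-1}(\zz)^{(k)}\,\det\mathcal{F}(\zz,t),\quad 0\leq i,k\leq r-1,
\]
with the convention that the right-hand side vanishes when $k>i$.

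To verify these identities I would expand the left-hand side using the defining series of $\mathcal{E}^{[j]}_{\zz,q^i}(t)$ together with the Pellarin expansion \eqref{E:AndPel} of $s_j^{(k)}(\zz,t)$. After interchanging summations one obtains
\[
\sum_{j=1}^{r}\mathcal{E}^{[j]}_{\zz,q^i}(t)\,s_j^{(k)}(\zz,t)
=\sum_{l\ge 0}\frac{\alpha_l(\zz)^{q^k}}{\theta^{q^{l+k}}-t}\sum_{\mathbf{a}\ne 0}\frac{\sum_{j=1}^{r}a_j(t)\,z_j^{q^{l+k}}}{\lambda(\mathbf{a})^{q^i}},
\]
where $\lambda(\mathbf{a}):=a_1z_1+\cdots+a_rz_r$. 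The pivotal input is the identity $\sum_{j=1}^{r}a_j(\theta^{q^m})\,z_j^{q^m}=\lambda(\mathbf{a})^{q^m}$, valid because $a_j(\theta^{q^m})=a_j^{q^m}$ by Frobenius additivity. Matching residues at each pole $t=\theta^{q^{l+k}}$ with $l+k\le i$ converts the inner sum into $\Eis_{q^i-q^{l+k}}(\zz)$, which by \eqref{E:eis2} equals $-\beta_{i-l-k}(\zz)^{q^{l+k}}$. Comparing term-by-term with the partial fraction expansion of $c_{q^{i-k}-1}(\zz)^{(k)}$ from Lemma~\ref{L:Coef} identifies the left-hand side with a common scalar multiple of $c_{q^{i-k}-1}(\zz)^{(k)}$; the contributions corresponding to $l+k>i$ must telescope away, which I expect to follow from the recursion \eqref{E:EisCoef} relating the Eisenstein series to the coefficient forms.

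Finally, this common scalar factor must be identified with $-\tilde{\pi}^{(q^r-1)/(q-1)}h_r(\zz)\det\mathcal{F}(\zz,t)/\omega(t)$. For this I would combine the twist relation $\omega^{(1)}(t)=(t-\theta)\omega(t)$, the functional equation $(\theta-t)s_i+g_1 s_i^{(1)}+\cdots+g_r s_i^{(r)}=z_i$ (which expresses the ``missing'' column $s_i^{(r)}$ in terms of the columns of $\mathcal{F}$ plus a rank-one correction from $\zz$, and thereby controls $\det\mathcal{F}^{(1)}(\zz,t)$), and Gekeler's product formula \eqref{E:hfunc} $g_r(\zz)=\tilde{\pi}^{q^r-1}(-1)^{r-1}h_r(\zz)^{q-1}$. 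The main obstacle is precisely this last identification: correctly tracking signs and powers of $\tilde{\pi}$ when passing from $\det\mathcal{F}(\zz,t)^{(1)}$ through $g_r(\zz)$ to $h_r(\zz)$, handling the rank-one correction from $\zz$ in the Frobenius twist, and rigorously justifying the residue-matching step when the natural evaluation of $\mathcal{E}^{[j]}_{\zz,q^i}(t)$ at $t=\theta^{q^{l+k}}$ falls outside the domain of absolute convergence of $\TT$.
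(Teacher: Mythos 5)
Your opening reduction is exactly the paper's proof read in reverse: the paper establishes the matrix identity
\[
\begin{pmatrix}
\mathcal{E}^{[1]}_{\zz,1}(t)&\dots &\mathcal{E}^{[r]}_{\zz,1}(t)\\
\vdots& &\vdots\\
\mathcal{E}^{[1]}_{\zz,q^{r-1}}(t)&\dots &\mathcal{E}^{[r]}_{\zz,q^{r-1}}(t)
\end{pmatrix}\mathcal{F}(\zz,t)=-\begin{pmatrix}
c_0(\zz)&  & \\
\vdots&\ddots & \\
c_{q^{r-1}-1}(\zz) & \dots & c_0(\zz)^{(r-1)}
\end{pmatrix}
\]
and then right-multiplies by $\mathcal{F}(\zz,t)^{-1}=\det\mathcal{F}(\zz,t)^{-1}\operatorname{adj}\mathcal{F}(\zz,t)$, while you post-multiply the asserted identity by $\mathcal{F}(\zz,t)$ to arrive at the same family of scalar relations $\sum_j\mathcal{E}^{[j]}_{\zz,q^i}(t)s_j^{(k)}(\zz,t)=-c_{q^{i-k}-1}(\zz)^{(k)}$ (your scalar prefactor reduces to $-1$ once $\det\mathcal{F}(\zz,t)=\omega(t)\tilde{\pi}^{-(q^r-1)/(q-1)}h_r(\zz)^{-1}$ is known). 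The crucial difference is that the paper does not prove these relations: it quotes them, namely the column identity from Pellarin \cite[Lem.~8.3, Thm.~8.8]{Pel19} (equivalently \cite[Thm.~3.11]{CG21}), the row identity from \cite[Thm.~3.2]{CG21}, the twisting observation $\mathcal{E}^{[j]}_{\zz,1}(t)^{(k)}=\mathcal{E}^{[j]}_{\zz,q^k}(t)$ to fill in the remaining entries, and the determinant formula from \cite[Prop.~3.4, (3.4)]{CG21}. Citing those results would close your proof immediately.

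Your attempt to reprove the scalar identities from scratch, however, has genuine gaps which you partly acknowledge. First, matching residues at the poles $t=\theta^{q^{l+k}}$ only pins down the principal parts of both sides; since $\sum_j\mathcal{E}^{[j]}_{\zz,q^i}(t)s_j^{(k)}(\zz,t)$ is not a priori a rational function of $t$ with poles only at those points, residue comparison does not show it equals the finite partial-fraction expression for $-c_{q^{i-k}-1}(\zz)^{(k)}$ coming from Lemma~\ref{L:Coef}; one must separately show the difference (an element of $\TT$ with no poles in the relevant region) vanishes, which is the actual content of Pellarin's argument. Second, for $l+k>i$ the inner sum you would evaluate is $\sum_{\mathbf{a}\neq 0}\lambda(\mathbf{a})^{q^{l+k}-q^i}$, a divergent series, so there is nothing to ``telescope away'' via \eqref{E:EisCoef}; the interchange of summation and the evaluation at $t=\theta^{q^{l+k}}$ (which lies outside the disc $\inorm{t}\leq 1$ of convergence of $\mathcal{E}^{[j]}_{\zz,q^i}$) both require the analytic continuation that is part of what is being proved. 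Third, the identification of $\det\mathcal{F}(\zz,t)$ with $\omega(t)\tilde{\pi}^{-(q^r-1)/(q-1)}h_r(\zz)^{-1}$, which you flag as the main obstacle, is again a known input (\cite[(3.4)]{CG21} together with \eqref{E:hfunc}) rather than something to be rederived here. As written, the proposal is a correct reduction followed by an incomplete proof of the inputs that the paper takes as given.
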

\begin{proof} Using Pellarin's result \cite[Prop. 4.8.3 and Thm. 4.8.8]{Pel19} (see also \cite[Thm. 3.11]{CG21}), for any $0\leq j \leq r-1$, one can have 	\begin{equation}\label{E:Comp1}
	(\mathcal{G}^{[1]}_{\zz,q^j}(t),\dots,\mathcal{G}^{[r]}_{\zz,q^j}(t))\begin{pmatrix}
	s_1(\zz,t)\\
	\vdots \\
	\vdots \\
	s_r(\zz,t)
	\end{pmatrix}=-c_{q^j-1}(\zz).
	\end{equation}
	On the other hand, by \cite[Prop. 3.18]{CG21}, we have 
	\begin{equation}\label{E:Comp2}
	(\mathcal{G}^{[1]}_{\zz,1}(t),\dots,\mathcal{G}^{[r]}_{\zz,1}(t))\mathcal{F}(\zz,t)=-(c_0(\zz),0,\dots,0).
	\end{equation}
	Thus, using \eqref{E:Comp1} and \eqref{E:Comp2} as well as applying suitable twisting operator, we obtain
	\[
	\begin{pmatrix}
	\mathcal{G}^{[1]}_{\zz,1}(t)&\dots &\mathcal{G}^{[r]}_{\zz,1}(t)\\
	\mathcal{G}^{[1]}_{\zz,q}(t)&\dots &\mathcal{G}^{[r]}_{\zz,q}(t)\\
	\vdots& &\vdots\\
	\mathcal{G}^{[1]}_{\zz,q^{r-1}}(t)&\dots &\mathcal{G}^{[r]}_{\zz,q^{r-1}}(t)
	\end{pmatrix}\mathcal{F}(\zz,t)=-\begin{pmatrix}
	c_0(\zz)&  & &\\
	c_{q-1}(\zz)&c_0(\zz)^{(1)} & & \\
	\vdots& &\ddots & \\
	c_{q^{r-1}-1}(\zz)&c_{q^{r-2}-1}(\zz)^{(1)} & \dots & c_0(\zz)^{(r-1)}
	\end{pmatrix}.
	\]
	Now the proposition follows from the right multiplication of both sides of above by the inverse of $\mathcal{F}(\zz,t)$ and using \cite[Prop. 3.4]{CG21}.
\end{proof}
For each $1 \leq i\leq r-1$ and $1\leq j \leq r$, recall the function $L_{ij}:\Omega^r\to \CC_{\infty}$ which sends $\zz\in \Omega^r$ to the $(i,j)$-cofactor of the period matrix $P_{\zz}$. It can be easily seen by using Proposition \ref{P:AGF} that $L_{ij}$ is a rigid analytic function on $\Omega^r$.

\begin{theorem}\label{T:GM2} For $1\leq i,j \leq r-1$, we have
		\[\partial_{j}(g_i)(\zz)=E_r^{[j]}(\zz)g_i(\zz)+\tilde{\pi}^{q+\dots+q^{r-1}}h_r(\zz)L_{j(i+1)}(\zz).
        \]
\end{theorem}
\begin{proof}  
	Set 
	\[
	G(\zz,t):=(\theta^{q^i}-t)\mathcal{G}^{[j]}_{\zz,q^i}(t)+\sum_{k=1}^{i-1}\mathcal{G}^{[j]}_{\zz,q^k}(t)g_{i-k}(\zz)^{q^k}.
	\]
	 Observe from \eqref{E:mat} that 
	\begin{align*}
	G(\zz,t)&=-\frac{\tilde{\pi}^{\frac{q^r-1}{q-1}}h_r(\zz)}{\omega(t)}\Big((\theta^{q^i}-t)(c_{q^i-1}(\zz)\tilde{L}_{j1}(\zz,t)+c_{q^{i-1}-1}(\zz)^{(1)}\tilde{L}_{j2}(\zz,t)\\
	&\ \ \ +\dots+c_{q-1}(\zz)^{(i-1)}\tilde{L}_{ji}(\zz,t)) +\tilde{L}_{j(i+1)}(\zz,t)+g_{i-1}(\zz)^qc_{q-1}(\zz)\tilde{L}_{j1}(\zz,t)\\
	&\ \ \ +g_{i-1}(\zz)^qc_0(\zz)^{(1)}\tilde{L}_{j2}(\zz,t)+g_{i-2}(\zz)^{q^2}c_{q^2-1}(\zz)\tilde{L}_{j1}(\zz,t) + g_{i-2}(\zz,t)^{q^2}c_{q-1}(\zz)^{(1)}\tilde{L}_{j2}(\zz,t) \\
	&\ \ \ +g_{i-2}(\zz)^{q^2}c_0(\zz)^{(2)}\tilde{L}_{j3}(\zz,t)+\cdots  + g_1(\zz)^{q^{i-1}}c_{q^{i-1}-1}(\zz)\tilde{L}_{j1}(\zz,t)\\
	&\ \ \  +g_1(\zz)^{q^{i-1}}c_{q^{i-2}-1}(\zz)^{(1)}\tilde{L}_{j2}(\zz,t)+\dots +g_1(\zz)^{q^{i-1}}c_0(\zz)^{(i-1)}(\zz)\tilde{L}_{ji}(\zz,t)\Big)\\
	&=-\frac{\tilde{\pi}^{\frac{q^r-1}{q-1}}h_r(\zz)}{\omega(t)}\Big((\theta^{q^i}-t)c_{q^i-1}(\zz)+g_1(\zz)^{q^{i-1}}c_{q^{i-1}-1}(\zz)+\dots +g_{i-1}(\zz)^qc_{q-1}(\zz)\Big)\tilde{L}_{j1}(\zz,t)\\
	&\ \ \  -\frac{\tilde{\pi}^{\frac{q^r-1}{q-1}}h_r(\zz)}{\omega(t)}\Big((\theta^{q^{i-1}}-t)c_{q^{i-1}-1}(\zz)+g_1(\zz)^{q^{i-2}}c_{q^{i-2}-1}(\zz)+\cdots+g_{i-1}(\zz)c_{0}(\zz)\Big)^{(1)}\tilde{L}_{j2}(\zz,t) \\
	&\ \ \   -\cdots \\
	&\ \ \  -\frac{\tilde{\pi}^{\frac{q^r-1}{q-1}}h_r(\zz)}{\omega(t)}\Big((\theta^{q}-t)c_{q-1}(\zz)+g_1(\zz)c_{0}(\zz)\Big)^{(i-1)}\tilde{L}_{ji}(\zz,t)-\frac{\tilde{\pi}^{\frac{q^r-1}{q-1}}h_r(\zz)}{\omega(t)}\tilde{L}_{j(i+1)}(\zz,t)\\
	&=-\frac{\tilde{\pi}^{\frac{q^r-1}{q-1}}h_r(\zz)}{\omega(t)}\Big(\tilde{L}_{j(i+1)}(\zz,t)+\frac{g_i(\zz)}{t-\theta}\tilde{L}_{j1}(\zz,t)\Big)
	\end{align*}
	where the last equality follows from Proposition \ref{P:2}. 
	On the other hand, by \eqref{E:EisCoef}, we have 
	\begin{equation}\label{E:gExp}
	g_i(\zz)=(\theta^{q^i}-\theta)\Eis_{q^i-1}(\zz)+\sum_{k=1}^{i-1}\Eis_{q^k-1}(\zz)g_{i-k}(\zz)^{q^k}.
	\end{equation}
	Applying the operator $\partial_{j}$ to both sides of \eqref{E:gExp} and using \eqref{E:partderEis}, we obtain 
	\begin{equation}\label{E:det0}
	\partial_j(g_i)(\zz)=G(\zz,\theta).
	\end{equation}
Moreover, by \cite[Thm. 5.4(i), Cor. 5.9]{CG21} and Proposition \ref{P:AGF}, we have 
	\[
	E_r^{[j]}(\zz)=-\frac{\tilde{\pi}^{\frac{q^r-1}{q-1}}h_r(\zz)}{\omega(t)(t-\theta)}\tilde{L}_{j1}(\zz,t)_{|t=\theta}
	\]
	and 
	\[
	\Res_{t=\theta}\tilde{L}_{j(i+1)}(\zz,t)=L_{j(i+1)}(\zz).
	\]
	Thus by \eqref{E:resomega}, \eqref{E:det0} and the above calculation, we have 
	\begin{align*}
	\partial_j(g_i)(\zz)&=G(\zz,\theta)\\
	&=-\frac{\tilde{\pi}^{\frac{q^r-1}{q-1}}h_r(\zz)}{\omega(t)}\Big(\tilde{L}_{j(i+1)}(\zz)+\frac{g_i(\zz)}{t-\theta}\tilde{L}_{j1}(\zz)\Big)_{|t=\theta}\\
	&=E_r^{[j]}(\zz)g_i(\zz)+\tilde{\pi}^{q+\dots+q^{r-1}}h_r(\zz)L_{j(i+1)}(\zz)
	\end{align*}
	as desired.
\end{proof}
For any $\gamma \in \GL_r(A)$ and $\zz\in \Omega^r$, recall the coefficients $\mathfrak{c}^{\gamma}_{jl}(\zz)$ from \S4.1. 
\begin{corollary} For any $\gamma\in \GL_{r}(A)$, $\zz\in \Omega^r$ and $1\leq i,j\leq r-1$, we have 
\[
(h_rL_{j(i+1)})(\gamma\cdot\zz)=j(\gamma,\zz)^{q^i}\det(\gamma)^{-1}\Big(\sum_{l=1}^{r-1}(h_rL_{l(i+1)})(\zz)\mathfrak{c}^{\gamma}_{jl}(\zz)\Big).
\]
\end{corollary}
\begin{proof}
By Theorem \ref{T:GM2}, we see that 
\[
(h_rL_{j(i+1)})(\zz)=\tilde{\pi}^{-(q+\dots+q^{r-1})}\left(\partial_j(g_i(\zz))-E_r^{[j]}(\zz)g_i(\zz)\right)=\tilde{\pi}^{-(q+\dots+q^{r-1})}\mathcal{D}_{j,q^i-1}(g_i)(\zz).
\]
Hence the corollary follows immediately from Lemma \ref{L:1}(iii).
\end{proof}
We now continue with our next proposition which leads an alternative description for Gekeler's $h$-function $h_r$.
\begin{proposition}\label{P:hfunc} Consider the $(r-1)\times (r-1)$ matrix $\mathcal{R}$ given by 
	\[
 \mathcal{R}:=\begin{pmatrix}
	L_{12}&\cdots & \cdots &L_{1r}\\
	\vdots & & &\vdots  \\
	\vdots & & & \vdots \\
	L_{(r-1)2} &\cdots &\cdots & L_{(r-1)r}
	\end{pmatrix}.
	\]    
    Then, we have
\begin{equation}\label{E:DET}
	\det((h_r\mathcal{R})(\zz))=\tilde{\pi}^{-(q+\dots+q^{r-1})(r-2)}h_r(\zz).
	\end{equation}
    In particular, we have 
 	\[
	\D_{(q-1,\dots,q^{r-1}-1)}(g_1,\dots,g_{r-1})(\zz)=\tilde{\pi}^{q+\dots+q^{r-1}}h_r(\zz).
	\]
\end{proposition}
\begin{proof} Writing the inverse of the period matrix $P_{\zz}$ in terms of its cofactors, we have $P_{\zz}^{-1}=\det(P_{\zz})^{-1}(L_{ji})_{i,j}$. Since the $(r,1)$-entry of $P_{\zz}=(P_{\zz}^{-1})^{-1}$ is equal to the $(1,r)$-cofactor of $P_{\zz}^{-1}$ times $\det(P^{-1}_\zz)^{-1}$, we obtain 
\[
	-z_r=-1=(-1)^{r+1}\frac{\det(\mathcal{R})(\zz)}{ \det(P_{\zz})^{r-2}}.
	\]
    On the other hand, \cite[(3.4)]{CG21} and Proposition \ref{P:AGF} imply that $\det(P_{\zz})=-\tilde{\pi}^{-(q+\dots+q^{r-1})}h_r(\zz)^{-1}$. Since $z_r=1$, we thus obtain 
	\[
	\det((h_r\mathcal{R})(\zz))=\tilde{\pi}^{-(q+\dots+q^{r-1})(r-2)}h_r(\zz)
    \]
which is the first assertion. For the second assertion, note, by Theorem \ref{T:GM2}, that 
\[
\D_{(q-1,\dots,q^{r-1}-1)}(g_1,\dots,g_{r-1})(\zz)=\tilde{\pi}^{(q+\dots+q^{r-1})(r-1)}\det((h_r\mathcal{R})(\zz)).
\]
Then the desired statement simply follows from the first assertion.
    \end{proof}

\begin{remark} As observed by the referee, we indeed have
\begin{multline*}
\tilde{\pi}^{(q+\dots+q^{r-1})(r-1)}\det((h_r\mathcal{R})(\zz))=\det\left(\left(g_r(\zz)\partial_j\left(\frac{g_i}{g_r}\right)(\zz)\right)_{1\leq i,j\leq r-1}\right)\\=\frac{1}{g_r(\zz)}\det\begin{pmatrix}
      \partial_1(g_1)(\zz)&\cdots & \partial_{r-1}(g_{1})(\zz)&g_1(\zz)\\
      \vdots &  & \vdots & \vdots \\
      \partial_1(g_r)(\zz) &\cdots & \partial_{r-1}(g_{r})(\zz)&g_r(\zz)
  \end{pmatrix}.
\end{multline*}
Here the first equality follows from Theorem \ref{T:GM2} and the second equality follows from an expansion argument used in the proof of Lemma \ref{L:det}.
\end{remark}

\subsection{The $\CC_{\infty}$-algebra $\mathcal{M}_r$} 

In this subsection, our goal is to analyze certain properties of the $\CC_{\infty}$-algebra $\mathcal{M}_r$ introduced in \S1 which could be seen as an initial step to investigate differential structure of Drinfeld modular forms of arbitrary rank. Recall that we define the $\CC_{\infty}$-algebra $\mathcal{M}_r$ generated by the following rigid analytic functions 
\[
\mathcal{M}_r:=\mathbb{C}_\infty[g_1,\dots,g_{r-1},h_r, \partial_j(g_i), E_r^{[j]}\mid 1\leq i,j\leq r-1].
\]

The following proposition was implicitly used in an earlier version of the paper and its statement as well as its proof become more complete in the present version thanks to the suggestions and comments provided by Ernst-Ulrich Gekeler.
\begin{proposition}\label{P:Gek} For any $1\leq i,j \leq r-1$, let $\mathcal{D}:=\partial_{i}\circ\partial_{j}$ be a homogeneous linear differential operator of degree 2.  For $k\in \ZZ_{\geq 1}$ and any non-constant $a\in A$, if $f$ is either a coefficient form $g_{k}$ or an Eisenstein series of weight $q^k-1$, then $\mathcal{D}(f)\equiv0$. In particular, for any $1\leq i,j \leq r-1$, we have 
	\[
	\partial_{j}(E_r^{[i]})(\zz)=-E_r^{[j]}(\zz)E_r^{[i]}(\zz).
	\]\end{proposition}
\begin{proof} Recall the notation used in \S2.1 and \S3. Suppose first that $f(\zz)=\Eis_{q^k-1}(A^r\zz)$. 
Then,  we have
	\[
	\partial_{j}(f)(\zz)=\sum_{\substack{(a_1,\dots,a_r)\in A^r\\(a_1,\dots,a_r)\neq (0,\dots,0)}}\frac{a_j}{(a_1z_1+\dots+a_rz_r)^{q^k}}
	\]
and by Lemma \ref{L:holEisenstein}, the infinite sum on the right hand side above converges. Since the characteristic of $\mathbb{C}_{\infty}$ is $p$, the above identity implies that the derivation $\partial_{i}$ vanishes on $\partial_{j}f$ and it finishes the proof. On the other hand, if $f=g_{k}$, then the proposition follows from applying the previous argument and considering \eqref{E:EisCoef} which provides a recursive formula for the coefficient forms in terms of Eisenstein series of weight $q^{\mu}-1$ for positive integers $\mu$ and $q$-th power of the lower forms. Finally, noting the definition of the functions $E_r^{[1]},\dots,E_r^{[r-1]}$, one can see that the last assertion is an immediate consequence of the first statement.
\end{proof}

Now we are ready to prove the main result of this subsection. 
\begin{theorem}\label{Thm:QM} The $\CC_{\infty}$-algebra $\mathcal{M}_r$ is stable under the partial derivatives and strictly contains the graded $\CC_{\infty}$-algebra of Drinfeld modular forms for $\GL_{r}(A)$.
\end{theorem}
\begin{proof} Since the $\mathbb{C}_{\infty}$-algebra of Drinfeld modular forms for $\GL_r(A)$ is generated by $g_1,\dots,g_{r-1}$ and $h_r$ \cite[Thm. 17.5]{BBP18}, $\mathcal{M}_r$ strictly contains the graded $\CC_{\infty}$-algebra of Drinfeld modular forms for $\GL_{r}(A)$. On the other hand, note that 

	\[
	E_r^{[j]}(\zz)=\frac{\partial_{j}(g_{r})(\zz)}{g_{r}(\zz)}=-\frac{h_r(\zz)^{q-2}\partial_{j}(h_r)(\zz)}{h_r(\zz)^{q-1}}=-\frac{\partial_{j}(h_r)(\zz)}{h_r(\zz)}		
	\] 
	and hence we have
    \begin{equation}\label{E:hder}
        \partial_{j}(h_r)(\zz)=-h_r(\zz)E_r^{[j]}(\zz).
        \end{equation}
    Then the stability of $\mathcal{M}_r$ under partial derivatives follows from \eqref{E:hder} and Proposition \ref{P:Gek}.
\end{proof}
We now define the $\CC_{\infty}$-algebra $\widetilde{\mathcal{M}_r}$ generated by the following rigid analytic functions 
\[
\widetilde{\mathcal{M}_r}=\mathbb{C}_\infty[g_i,h_rL_{ij}\mid 1\leq i\leq r-1,~1\leq j\leq r].
\]

\begin{theorem}\label{T:eq} We have
\[
    \widetilde{\mathcal{M}_r}=\mathcal{M}_r.
    \]
    \end{theorem}
\begin{proof} We first show that $\mathcal{M}_r\subseteq \widetilde{\mathcal{M}_r}$. 
    Using \cite[Thm. 1.3, Thm. 5.5, Cor. 5.9]{CG21}, for each $1\leq i \leq r-1$, we have 
\begin{equation}\label{E:falseeis}
E_r^{[i]}=\tilde{\pi}^{q+\cdots+q^{r-1}}h_rL_{i1}
\end{equation} 
implying that $E_{r}^{[i]}\in \widetilde{\mathcal{M}_r}$. Thus, using Theorem \ref{T:GM2}, we further see that $\partial_{\ell}(g_i)\in \widetilde{\mathcal{M}_r}$ for $1\leq \ell \leq r-1$. Finally, Proposition \ref{P:hfunc} implies that $h_r\in \widetilde{\mathcal{M}_r}$ and hence $\mathcal{M}_r\subseteq \widetilde{\mathcal{M}_r}$. To show that $\widetilde{\mathcal{M}_r}\subseteq \mathcal{M}_r$, it is enough to observe that $h_rL_{ij}\in \mathcal{M}_r$ for $1\leq j \leq r$ and it follows from Theorem \ref{T:GM2} and \eqref{E:falseeis}. Hence $\widetilde{\mathcal{M}_r}=\mathcal{M}_r$ as desired. 
\end{proof}

We finally record a lemma on how to calculate the partial derivatives of the functions $h_rL_{ij}$.
\begin{lemma}\label{L:D1} Let $\zz\in \Omega^r$. For each $1\leq i,j,l \leq r-1$, we have 
	\[
	\partial_{l}(h_rL_{j(i+1)})(\zz)=-E_r^{[j]}(\zz)(h_rL_{l(i+1)})(\zz).
	\]
\end{lemma}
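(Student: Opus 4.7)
The plan is to avoid differentiating $L_{j(i+1)}$ directly (we have no formula for its derivative) and instead use Theorem \ref{T:GM2}(ii) to replace $h_r L_{j(i+1)}$ by something we can differentiate. Solving Theorem \ref{T:GM2}(ii) for the product gives
\[
\tilde{\pi}^{q+\cdots+q^{r-1}} h_r(\zz) L_{j(i+1)}(\zz) = \partial_{j}(g_i)(\zz) - E_r^{[j]}(\zz)\, g_i(\zz).
\]
First I would apply $\partial_{l}$ to this identity, so that the left-hand side becomes $\tilde{\pi}^{q+\cdots+q^{r-1}} \partial_{l}(h_r L_{j(i+1)})$ and the right-hand side becomes
\[
\partial_{l}\partial_{j}(g_i) - \partial_{l}(E_r^{[j]})\, g_i - E_r^{[j]}\,\partial_{l}(g_i).
\]

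Next I would simplify the right-hand side using results already available in the paper. By Proposition \ref{P:Gek} applied to the coefficient form $g_i$, the second-order term $\partial_l \partial_j (g_i)$ vanishes identically. By Lemma \ref{L:GML2}, we have $\partial_{l}(E_r^{[j]}) = -E_r^{[l]} E_r^{[j]}$. Substituting these in, the right-hand side reduces to
\[
E_r^{[l]} E_r^{[j]}\, g_i - E_r^{[j]}\, \partial_{l}(g_i).
\]

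Finally I would invoke Theorem \ref{T:GM2}(ii) once more, this time with the index $l$, to rewrite $\partial_{l}(g_i) = E_r^{[l]} g_i + \tilde{\pi}^{q+\cdots+q^{r-1}} h_r L_{l(i+1)}$. The two $E_r^{[l]} E_r^{[j]} g_i$ terms cancel, leaving
\[
\tilde{\pi}^{q+\cdots+q^{r-1}} \partial_{l}(h_r L_{j(i+1)})(\zz) = -E_r^{[j]}(\zz)\, \tilde{\pi}^{q+\cdots+q^{r-1}} h_r(\zz) L_{l(i+1)}(\zz),
\]
from which the claim follows after dividing by $\tilde{\pi}^{q+\cdots+q^{r-1}}$. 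There is no real obstacle here; the only subtlety is recognizing that the cleanest route is to differentiate the relation from Theorem \ref{T:GM2}(ii) rather than to attempt to compute $\partial_{l}(L_{j(i+1)})$ directly, and that the apparent asymmetry between $j$ and $l$ in the statement emerges from the cancellation of the $E_r^{[l]} E_r^{[j]} g_i$ cross-term.
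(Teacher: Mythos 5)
Your proposal is correct and follows essentially the same route as the paper: differentiate the identity $\tilde{\pi}^{q+\cdots+q^{r-1}}h_rL_{j(i+1)}=\partial_j(g_i)-E_r^{[j]}g_i$ from Theorem~\ref{T:GM2}(ii), kill the second-order term via Proposition~\ref{P:Gek}, use Lemma~\ref{L:GML2} for $\partial_l(E_r^{[j]})$, and apply Theorem~\ref{T:GM2}(ii) again to cancel the cross-terms. No gaps.
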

\begin{proof}  Observe that we have 
	\begin{align*}
	\partial_{l}(\tilde{\pi}^{q+\cdots+q^{r-1}} h_rL_{j(i+1)})(\zz)&=(\partial_l\circ\partial_{j})(g_i)(\zz)-\partial_{l}(E_r^{[j]}g_i)(\zz)\\
	&=E_r^{[l]}(\zz)E_r^{[j]}(\zz)g_i(\zz)-E_r^{[j]}(\zz)E_r^{[l]}(\zz)g_i(\zz)\\
	&\ \ \ \ \ \ \ -\tilde{\pi}^{q+\cdots+q^{r-1}}E_r^{[j]}(\zz)(h_rL_{l(i+1)})(\zz)\\
	&=-\tilde{\pi}^{q+\cdots+q^{r-1}}E_r^{[j]}(\zz)(h_rL_{l(i+1)})(\zz)
	\end{align*}
	where the first equality follows from Theorem \ref{T:GM2} and the second equality follows from Proposition \ref{P:Gek}. This calculation finishes the proof.
\end{proof}

\end{document}